\documentclass[11pt,a4paper]{article}
\usepackage{amsmath,amsfonts,amstext, amsthm, amssymb, tikz,  cite, colonequals, array, rotating,graphicx,ytableau}
\usetikzlibrary{patterns}
\usepackage{pgfplots}
\author{Mikl\'{o}s B\'{o}na and Marie-Louise Bruner}
\title{Log-concavity, the Ulam
distance and involutions\thanks{The second author was supported by the Austrian Science Foundation FWF, grant P25337-N23.}}
\date{\today}

\theoremstyle{plain}
\newtheorem{theorem} {Theorem} [section]

\newtheorem{conjecture} [theorem] {Conjecture}
\newtheorem{proposition} [theorem] {Proposition}
\newtheorem{definition} [theorem]{Definition}
\newtheorem{corollary} [theorem] {Corollary}

\theoremstyle{definition}

\newtheorem*{example*}{Example}
\theoremstyle{remark}

\usepackage{epstopdf}

\newcolumntype{P}[1]{>{\centering\arraybackslash}p{#1}}

\newcommand{\ra}{\rightarrow}
\newcommand{\da}{\downarrow}
\newcommand{\dashmapsto}{\mapstochar\dashrightarrow}
\newcommand{\dashdownarrow}{\raisebox{2.0ex}{\rotatebox{-90}{$\dashrightarrow$}}}
\newcommand{\dashuparrow}{\raisebox{-1.5ex}{\rotatebox{90}{$\dashrightarrow$}}}

\begin{document}

\maketitle
 
\begin{abstract} We prove that in a large collection of naturally defined sets of permutations of fixed length, the numbers of permutations
at Ulam distance $k$ from the identity  form a log-concave sequence in $k$.
\end{abstract}

\section{Introduction and background}

Let $a_0,a_1,\cdots ,a_n$ be a sequence of positive real numbers. We say that the sequence is {\em log-concave}
if for all indices $k$, the inequality $a_{k-1}a_{k+1} \leq a_k^2$ holds. Log-concave sequences play an important role
in Combinatorics; see Chapter 7 of \cite{handbook} for a recent survey by Petter Br\"and\'en on the subject. 

In the last ten years, there was significant interest in biologically motivated sorting algorithms and notions of distance
for permutations. A collection of these results can be found in \cite{genome}.  The second crucial notion of this paper, that of Ulam distances, is a special case of these. 
It was introduced by Ulam as an ``evolutionary distance'' in the context of biological sequences~\cite{ulam1972some}.

\begin{definition}
Given two permutations $\sigma$ and $\tau$ of the same length, the \emph{Ulam distance} $U(\sigma, \tau)$ is the minimal number of steps needed to obtain $\tau$ from $\sigma$ where each step consists in taking an element from the current permutation and placing it at some other position.
\end{definition}

If $\tau$ is the identity permutation $id$ of length $n$, then it is easy to see that $n- U(\sigma, \tau)$ is equal  to the length $\ell(\sigma)$ of the longest increasing subsequence in $\sigma$. Indeed, for any permutation $\sigma$ that is not the
identity permutation, we can find an allowed move that increases $\ell(\sigma)$ by one, but we can never find an allowed 
move that would increase $\ell(\sigma)$ by two. 

In what follows,  we will denote by $U_{n,k}$ the set of all permutations of length $n$ that have Ulam distance $n-k$ to the identity permutation. Equivalently, these are the permutations $\sigma$ that satisfy $\ell(\sigma)=k$. 
The cardinality of $U_{n,k}$ will be denoted by $u_{n,k}$. 

The algorithmic question of determining the length of the longest increasing subsequence can be answered in $O(n \log(n))$-time for sequences in general \cite{schensted1987longest} and in $O(n \log(\log(n)))$-time for permutations of length $n$~\cite{DBLP:journals/ipl/ChangW92}.
The distribution of the parameter $\ell(\sigma)$ has been
the subject of vigorous study for over 60 years. See \cite{baik} for strongest results on this subject, and see \cite{aldous} 
for a history of the problem. 
However, it is still not known whether the sequence $u_{n,1},u_{n,2},\cdots ,u_{n,n}$ is log-concave for each fixed $n$.

In this paper, we state our conjecture that the sequence mentioned in the previous sentence is indeed log-concave.
This conjecture is supported by numerical evidence and fits in a long line of facts \cite{bona-flynn} concerning other biologically
motivated sorting algorithms. 
Then we proceed to prove the conjecture in some special cases, that is, for certain subsets of permutations of length $n$,
as opposed to the entire set of $n!$ permutations of length $n$. 
One tool in our proofs will be a technique that allows us to turn injections between sets of involutions into injections between sets of permutations. In addition, we will use several consequences of the well-known Robinson-Schensted correspondence.
Recall that the Robinson-Schensted correspondence is a bijection that maps each permutation $p$ of length $n$ into an 
ordered pair $(P(p),Q(p))$ of  Standard Young Tableaux (SYT) of the same shape and of size $n$. 
In these two Standard Young Tableaux, the length of the first row corresponds to the length of the longest increasing subsequence in $p$.
Also recall the following: If the Robinson-Schensted correspondence maps $p$ into $(P(p),Q(p))$, then it maps the inverse permutation $p^{-1}$  to the pair $(Q(p),P(p))$.
Thus, if $p$ is an involution,  it corresponds to a pair $(P,P)$ and can be identified with the {\em single} SYT $P$.
See Chapter 14  of \cite{handbook} for a recent survey by Ron Adin and Yuval Roichman on Standard Young Tableaux.

\section{The conjecture and a first result}

Supported by the data that we computed for permutations of length up to $n=15$, we conjecture the following:

\begin{conjecture}
For every positive integer $n$ the sequence $u_{n,k}$ where $1 \leq k \leq n$ is log-concave.
\label{conj}
\end{conjecture}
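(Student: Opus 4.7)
The plan is to translate the conjecture into the language of Standard Young Tableaux (SYT) via the Robinson--Schensted correspondence and to seek a combinatorial injection that witnesses the inequality. Because RSK identifies $U_{n,k}$ with pairs of SYT of the same shape whose first row has length $k$, we have $u_{n,k}=\sum_{\lambda\vdash n,\,\lambda_1=k}(f^\lambda)^2$, where $f^\lambda$ is the number of SYT of shape $\lambda$, and the conjectured inequality $u_{n,k-1}u_{n,k+1}\le u_{n,k}^2$ becomes
\[
\Bigl(\sum_{\lambda_1 = k-1}(f^\lambda)^2\Bigr)\Bigl(\sum_{\mu_1=k+1}(f^\mu)^2\Bigr) \;\le\; \Bigl(\sum_{\nu_1=k}(f^\nu)^2\Bigr)^2.
\]
The natural combinatorial target is an injection $\Phi : U_{n,k-1}\times U_{n,k+1} \hookrightarrow U_{n,k}\times U_{n,k}$.

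Following the involution-lifting strategy advertised in the introduction, I would build $\Phi$ in two stages. First, construct an injection on SYT,
\[
\phi : \bigsqcup_{\lambda_1=k-1,\,\mu_1=k+1} \mathrm{SYT}(\lambda)\times\mathrm{SYT}(\mu) \;\hookrightarrow\; \bigsqcup_{\nu_1=\nu'_1=k}\mathrm{SYT}(\nu)\times\mathrm{SYT}(\nu'),
\]
which at the level of involutions already proves $i_{n,k-1}i_{n,k+1}\le i_{n,k}^2$. The natural candidates for $\phi$ are jeu de taquin slides or Sch\"utzenberger-type moves that transfer a cell from the first row of the ``wide'' tableau into the ``narrow'' one, so as to balance both first-row lengths at $k$. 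Second, lift $\phi$ to permutations: for $(\sigma,\tau)\in U_{n,k-1}\times U_{n,k+1}$ with RSK images $(P_\sigma,Q_\sigma)$ and $(P_\tau,Q_\tau)$, apply $\phi$ separately to the insertion-tableau pair $(P_\sigma,P_\tau)$ and to the recording-tableau pair $(Q_\sigma,Q_\tau)$, and then read the four output tableaux back into permutations in $U_{n,k}\times U_{n,k}$ through RSK${}^{-1}$.

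The main obstacle is compatibility between the two independent applications of $\phi$. For RSK${}^{-1}$ to produce a genuine pair of permutations, the shape of $\phi(P,P')$ must depend only on $\bigl(\mathrm{sh}(P),\mathrm{sh}(P')\bigr)$, not on the interior entries of the tableaux, so that the insertion- and recording-tableau outputs have matching shapes. This shape-rigidity constraint is exactly what makes injective proofs of Cauchy--Schwarz-type inequalities on SYT notoriously difficult: standard tableau operations route different shape-pairs to different target shapes depending on interior data. I would therefore expect the argument to split into (a) a bijection on shape-pairs $(\lambda,\mu)\mapsto(\nu,\nu')$ that concentrates mass near first-row length $k$ and satisfies $f^\lambda f^\mu\le f^\nu f^{\nu'}$ via hook-length manipulations, and (b) a uniform SYT-level injection respecting this shape map. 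Producing either ingredient in full generality, as opposed to the restricted families of permutations treated in the remainder of the paper, is the principal difficulty and the reason the statement remains conjectural.
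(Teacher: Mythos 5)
The statement you were asked to prove is Conjecture~\ref{conj}, which the paper itself does not prove; it is supported only by numerical data up to $n=15$ and is established only for restricted families of permutations in the later sections. Your proposal honestly stops short of a proof, and the strategy you outline is precisely the paper's own program: reduce to involutions (this is Theorem~\ref{invtoperm}, which lifts an injection $f$ on pairs of single SYT to an injection on pairs of permutations by applying $f$ separately to the insertion tableaux and the recording tableaux), and then try to build the tableau-level injection, which the authors succeed in doing only for hooks, $(l,m)$-protected tableaux, and two-row shapes. Your identification of the shape-compatibility obstruction is the one substantive point worth highlighting: for the lifted map $F$ to output genuine permutations, the two tableaux in $f(P_1,P_2)$ must have equal shapes, and likewise for $f(Q_1,Q_2)$, with the two resulting shapes agreeing pairwise --- a constraint the paper's proof of Theorem~\ref{invtoperm} passes over silently and which is automatically satisfied only in the special families it treats. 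So your assessment is accurate: no gap relative to the paper, because the paper proves no more than you do here, and your diagnosis of why the general case resists the injection method matches the actual state of the problem.
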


Let $I_{n,k}$ denote the set of all involutions of length $n$ that have Ulam distance $n-k$ to the identity permutation respectively the set of all involutions of length $n$ with longest increasing subsequence of length $k$. 
The cardinality of $I_{n,k}$ will be denoted by $i_{n,k}$.

\begin{theorem} \label{invtoperm}
For every positive integer $n$ the following holds: If the sequence $(i_{n,k})_{1\leq k \leq n}$ is log-concave, then so is the sequence $(u_{n,k})_{1\leq k \leq n}$.
\label{thm:involutions_to_arbirary_permutations}
\end{theorem}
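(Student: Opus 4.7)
My plan is to use the Robinson--Schensted correspondence to translate the desired inequality into a statement about pairs of SYT, and then to lift the assumed injection on involutions to an injection on permutations. Via RSK,
\[
i_{n,k}=\sum_{\lambda\vdash n,\ \lambda_1=k}f^\lambda,\qquad u_{n,k}=\sum_{\lambda\vdash n,\ \lambda_1=k}(f^\lambda)^2,
\]
so $I_{n,k}$ is in bijection with SYT of size $n$ with first row of length $k$, while $U_{n,k}$ is in bijection with ordered pairs of such SYT of the same shape. The hypothesis gives an injection $\phi\colon I_{n,k-1}\times I_{n,k+1}\hookrightarrow I_{n,k}\times I_{n,k}$, which I will view as an injection acting on pairs of SYT.

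Next I will construct an injection $\Phi\colon U_{n,k-1}\times U_{n,k+1}\hookrightarrow U_{n,k}\times U_{n,k}$ by applying $\phi$ twice. Given $(p,q)$ with RSK tableaux $(P_p,Q_p)$ of shape $\lambda$ and $(P_q,Q_q)$ of shape $\mu$, I apply $\phi$ to $(P_p,P_q)\in I_{n,k-1}\times I_{n,k+1}$ to obtain $(A_1,A_2)\in I_{n,k}\times I_{n,k}$, and similarly to $(Q_p,Q_q)$ to obtain $(B_1,B_2)$. I then set $\Phi(p,q)=(r,s)$, where $r$ and $s$ are the permutations with RSK data $(A_1,B_1)$ and $(A_2,B_2)$, respectively. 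Injectivity of $\Phi$ then follows immediately from that of $\phi$, since from the four output tableaux one can recover $(P_p,P_q)$ and $(Q_p,Q_q)$, and hence $(p,q)$.

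The main obstacle is that $(A_1,B_1)$ and $(A_2,B_2)$ represent valid RSK images only if $\mathrm{sh}(A_1)=\mathrm{sh}(B_1)$ and $\mathrm{sh}(A_2)=\mathrm{sh}(B_2)$. Since the two inputs to $\phi$ share the same profile of input shapes $(\lambda,\mu)$, this shape-matching holds automatically provided $\phi$ is chosen to be \emph{shape-determined}, meaning that $\mathrm{sh}(\phi(T,S))$ depends only on the pair $(\mathrm{sh}(T),\mathrm{sh}(S))$. The crucial step is therefore to construct such a shape-determined $\phi$ from the hypothesis. I expect this to follow from a Hall- or transportation-type argument on the bipartite structure whose left vertices are partition pairs $(\lambda,\mu)$ with $\lambda_1=k-1$, $\mu_1=k+1$ (of weight $f^\lambda f^\mu$) and whose right vertices are partition pairs $(\nu,\rho)$ with $\nu_1=\rho_1=k$ (of capacity $f^\nu f^\rho$), with aggregate condition $\sum f^\lambda f^\mu\le\sum f^\nu f^\rho$ given by the log-concavity of $i_{n,k}$. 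Verifying that this blockwise packing is feasible in the present setting is the main technical challenge.
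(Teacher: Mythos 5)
Your overall strategy is the same as the paper's: identify involutions with single SYT via Robinson--Schensted, view the hypothesized injection $\phi\colon I_{n,k-1}\times I_{n,k+1}\to I_{n,k}\times I_{n,k}$ as a map on pairs of tableaux, and lift it to permutations by acting separately on the $P$- and $Q$-tableaux. The one structural difference is the pairing of the outputs: the paper takes $w_1$ to be the permutation whose tableau pair is $f(P_1,P_2)$ and $w_2$ the one whose pair is $f(Q_1,Q_2)$, whereas you cross the outputs, setting $r=(A_1,B_1)$ and $s=(A_2,B_2)$. Your version is the sounder one: the paper's pairing requires the two tableaux inside a \emph{single} output $f(P_1,P_2)$ to have equal shape, a ``shape-equalizing'' property that an abstract injection has no reason to satisfy (its two inputs $P_1,P_2$ do not even have the same shape), while your pairing needs only that $\phi$ be shape-determined, which is exactly the condition you isolate. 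The paper's proof passes over this point in silence; you are right to flag it as the crux.

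That said, your proposal does not close the gap, and the route you sketch cannot close it from the stated hypothesis alone. A shape-determined injection must send the entire block of $f^\lambda f^\mu$ tableau pairs with shape profile $(\lambda,\mu)$ into a single target block of capacity $f^\nu f^\rho$; the existence of such a blockwise packing is an indivisible bin-packing condition that is strictly stronger than the aggregate inequality $\sum_{\lambda_1=k-1,\,\mu_1=k+1}f^\lambda f^\mu\le\sum_{\nu_1=\rho_1=k}f^\nu f^\rho$ (compare supplies $3,3$ against bins of capacity $2,2,2$), so no Hall- or transportation-type argument starting only from the log-concavity of $(i_{n,k})$ can produce it. As written, then, your proof and the paper's are both incomplete at the same point, yours at least explicitly so. Note that in every application the paper actually makes of this theorem (hooks, $(l,m)$-protected tableaux, two-row tableaux) the shape of a tableau in the class is determined by the length of its first row, so any injection between those classes is automatically shape-determined and your construction goes through verbatim; if you either restrict to such classes or strengthen the hypothesis to ``there is a shape-determined injection,'' your argument is complete.
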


\begin{proof}
In order to show that the sequence $(u_{n,k})$ is log-concave it would suffice to find an injection from $U_{n,k-1} \times U_{n,k+1}$ to $U_{n,k} \times U_{n,k}$ for all $n \geq 1$ and $2 \leq k \leq n-1$.

Assume that the statement of log-concavity is true for involutions. 
Then there is an injection $f_{n,k}$ from $I_{n,k-1} \times I_{n,k+1}$ to $I_{n,k} \times I_{n,k}$ for all $n \geq 1$ and $2 \leq k \leq n-1$.
Now let $p_1\in U_{n,k-1}$ and $p_2\in U_{n,k-2}$. 
Then, they correspond to the pairs $(P_1,Q_1)$ and $(P_2,Q_2)$ of Standard Young Tableaux.
Define $F(p_1,p_2) = (w_1,w_2)$, where $w_1$ is the permutation in $U_{n,k}$ whose pair of SYT is $f(P_1,P_2)$, and $w_2$ is the permutation in $U_{n,k}$ whose pair of SYT is $f(Q_1,Q_2)$.
Then $F$ is injective since $f$ is injective. 
\end{proof}

In the following, we will apply this result to specific classes of permutations and show that our conjecture indeed holds there.

\section{A class of permutations for which the conjecture holds}

First we show that the conjecture holds for permutations whose corresponding SYT are \textit{hooks}.
This implies that the conjecture is true for the class of \textit{skew-merged involutions}.
Then we will define a generalization of hooks, introducing $(l,m)$-protected SYT.
We will see that the conjecture holds for these much larger classes of SYT respectively permutations as well, for every pair $(l,m)$ of non-negative integers.

\subsection{Hook-shaped SYT and skew-merged involutions}

\begin{definition}
We call a SYT a \emph{hook} if it consists of exactly one row and one column.
\label{def:hook}
\end{definition}

In the following, let $H_{n,k}$ denote the set of all hooks of size $n$ with  first row of length $k$.
The cardinality of $H_{n,k}$ will be denoted by $h_{n,k}$.

\begin{theorem}
For every positive integer $n$ the sequence $(h_{n,k})_{1\leq k \leq n}$ is log-concave.
\label{thm:log-conc_hooks}
\end{theorem}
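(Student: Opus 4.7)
The plan is to give an explicit closed form for $h_{n,k}$ and then invoke the well-known log-concavity of a row of Pascal's triangle.

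First I would compute $h_{n,k}$ directly. A hook of size $n$ with first row of length $k$ has the cell $(1,1)$ plus $k-1$ further cells in the first row and $n-k$ further cells in the first column. In any SYT of this shape the corner must contain $1$. The remaining $n-1$ entries $\{2,3,\dots,n\}$ must be split into a subset of size $k-1$ placed (in increasing order) along the rest of the row and a subset of size $n-k$ placed (in increasing order) down the rest of the column. Each choice of the row subset determines the filling uniquely, so
\[
h_{n,k} = \binom{n-1}{k-1}.
\]

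Next, the desired log-concavity $h_{n,k-1}\,h_{n,k+1} \leq h_{n,k}^2$ becomes
\[
\binom{n-1}{k-2}\binom{n-1}{k} \leq \binom{n-1}{k-1}^{2},
\]
which is the standard log-concavity of the binomial coefficients $\binom{n-1}{j}$ in $j$ (easily verified by writing the ratio and simplifying, or by citing it as a classical fact). This settles the inequality for $2\le k\le n-1$; the positivity of all terms $h_{n,k}$ in the range $1\le k\le n$ is immediate from the binomial formula.

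I do not anticipate any real obstacle here: the identification of $h_{n,k}$ as a binomial coefficient is the one combinatorial observation needed, and the inequality is then textbook. The only point to be slightly careful about is the boundary: the statement concerns $1\le k\le n$, and we must simply note that log-concavity is a condition only at interior indices, so no degenerate cases arise.
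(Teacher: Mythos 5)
Your proof is correct, and in fact it coincides with the opening remark of the paper's own proof: the authors also observe that $h_{n,k}=\binom{n-1}{k-1}$ (by choosing which of the $n-1$ entries larger than $1$ go into the first row) and note that log-concavity is then immediate from the log-concavity of binomial coefficients. However, the paper deliberately does not stop there; the bulk of its proof is an explicit combinatorial construction, by induction on $n$, of type-preserving injections $\varphi_{n,k,l}\colon H_{n,k}\times H_{n,l}\to H_{n,k+1}\times H_{n,l-1}$, where the ``type'' records whether the entry $n$ sits at the end of the first row or the first column of each tableau. The reason this extra work matters is not the theorem itself but its reuse downstream: in the proof of the log-concavity result for $(l,m)$-protected SYT (Theorem~\ref{thm:log-concave_protected}), the map $\psi_{n,k}$ is built by extracting the surpluses of the two tableaux, forming hooks from them, and applying $\varphi_{n-m+1,k-l,k-l+2}$ to that pair of hooks. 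A purely numerical verification via binomial coefficients would prove the present theorem but would leave nothing to apply in that later argument. So your route is perfectly adequate for the statement as posed, and is the more economical one; the paper's injective construction buys the explicit map that the rest of the paper needs.
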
 

\begin{proof}
First, let us remark than it is straightforward to determine the numbers $h_{n,k}$.
Indeed, in order to create a hook with $n$ boxes with $k$ of them in the first row, we simply need to  choose the $(n-1)$ elements larger than 1 that will be in the first row.
The remaining elements are then placed in increasing order in the first column.
Thus
\[
h_{n,k}= \binom{n-1}{k-1}
\]
and it is immediately clear that the sequence $(h_{n,k})_{1\leq k \leq n}$ is log-concave.
In the following we will provide a combinatorial explanation for this fact.

In order to give a combinatorial proof of the log-concavity of $(h_{n,k})_{1\leq k \leq n}$ we follow the same procedure as in~\cite{bona2005combinatorial}: The sequence $(h_{n,k})_{1\leq k \leq n}$ is log-concave if and only if $h_{n,k} \cdot h_{n,l} \leq h_{n,k+1} \cdot h_{n,l-1}$ for all $n \geq 1$ and $1 \leq k \leq l-2 \leq n-2$.
We shall therefore inductively construct injections  $\varphi_{n,k,l}$ from $H_{n,k} \times H_{n,l}$ to $H_{n,k+1} \times H_{n,l-1}$ for all $n \geq 3$ and $1 \leq k \leq l-2 \leq n-2$. 
First we construct the injections $\varphi_{n,k,l}$ for the smallest
meaningful value of $n$, which is $n = 3$.
Since there is only a single element in $H_{3,1} \times H_{3,3}$, we shall also describe the functions $\varphi_{4,k,l}$ for all admissible values of $k$ and $l$.
Next, for the induction step, we use the assumption that the maps $\varphi_{n-1,k,l}$ exist for all admissible values of $k$ and $l$ to construct the maps $\varphi_{n,k,k+2}$.
It is not necessary to construct the maps $\varphi_{n,k,l}$ for $k <l-2$ since the existence of the injective maps $\varphi_{n,k,k+2}$ implies the log-concavity of the sequence $(h_{n,k})_{1\leq k \leq n}$ which implies the existence of the maps $\varphi_{n,k,l}$ for $1 \leq k < l-2 \leq n-2$.

First note that the element $n$ in a hook of size $n$ will always lie in the last box of the first column or in the last box of the first row.
This allows us to define the \emph{type} of a hook-shaped Standard Young Tableau: it is of type $\downarrow$ if the element lies in the first column and of type $\rightarrow$ otherwise.
Since we are dealing with pairs of SYT there are four possible types of pairs that can occur:
$\downarrow \downarrow $, $\downarrow \rightarrow$, $\rightarrow \downarrow$ and $\rightarrow \rightarrow$.
The maps $\varphi_{n,k,l}$ that we are going to construct are such that the type is preserved: the type of the image $\varphi_{n,k,l}(T_1, T_2)$ is the same as the type of $(T_1, T_2)$. 
This will allow us to prove the injectivity of these maps.

Now let us start with the base step at $n=3$ of our induction proof.
The map $\varphi_{3,1,3}$ is described in the top part of Figure~\ref{fig:hooks_induction_start}.
Since $\varphi_{3,1,3}$ is defined on a single element and this does not allow us to see what the functions $\varphi_{n,k,l}$ actually do, we have also included the description of the functions
$\varphi_{4,k,l}$ for $(k,l)=(1,3)$, $(1,4)$ and $(2,4)$ in the bottom part of Figure~\ref{fig:hooks_induction_start}.

\begin{figure}
\centering
\ytableausetup{aligntableaux=top,boxsize=1.1em
}
\begin{tabular}{P{2.3cm}|ccc}
$\mathbf{n=3}$ $(k,l)=(1,3)$ & 
$\begin{pmatrix} \, \begin{ytableau}
1   \\
2 \\
*(black!20) 3
\end{ytableau} \, , \,
\begin{ytableau}
1 & 2 &  *(black!20) 3
\end{ytableau} \, \,\end{pmatrix}$  & $\longmapsto$ 
& 
$\begin{pmatrix} \, \begin{ytableau}
1  & 2 \\
*(black!20) 3
\end{ytableau} \, , \,
\begin{ytableau}
1 & *(black!20) 3 \\
2
\end{ytableau} \, \,\end{pmatrix}$ \\
[5ex] \hline \hline \\ [-1.5ex]
$\mathbf{n=4}$ $(k,l)=(1,3)$ & 
$\begin{pmatrix} \, \begin{ytableau}
1   \\
2 \\
3 \\
*(black!20) 4
\end{ytableau} \, , \,
\begin{ytableau}
1 & 2 &   3 \\
*(black!20) 4
\end{ytableau} \,\,\end{pmatrix}$  & $\longmapsto$ 
& 
$\begin{pmatrix} \, \begin{ytableau}
1 & 2 \\
3 \\
*(black!20) 4
\end{ytableau} \, , \,
\begin{ytableau}
1 & 3 \\
2 \\
*(black!20) 4
\end{ytableau} \, \,\end{pmatrix}$ \\ \\
& $\begin{pmatrix} \, \begin{ytableau}
1   \\
2 \\
3 \\
*(black!20) 4
\end{ytableau} \, , \,
\begin{ytableau}
1 & 2 &   *(black!20) 4 \\
3
\end{ytableau} \,\,\end{pmatrix}$  & $\longmapsto$ 
& 
$\begin{pmatrix} \, \begin{ytableau}
1 & 2 \\
3 \\
*(black!20) 4
\end{ytableau} \, , \,
\begin{ytableau}
1 & *(black!20) 4 \\
2 \\
3
\end{ytableau} \, \,\end{pmatrix}$ \\ \\
& $\begin{pmatrix} \, \begin{ytableau}
1   \\
2 \\
3 \\
*(black!20) 4
\end{ytableau} \, , \,
\begin{ytableau}
1 & 3 &   *(black!20) 4 \\
2
\end{ytableau} \,\,\end{pmatrix}$  & $\longmapsto$ 
& 
$\begin{pmatrix} \, \begin{ytableau}
1 & 3 \\
2 \\
*(black!20) 4
\end{ytableau} \, , \,
\begin{ytableau}
1 & *(black!20) 4 \\
2 \\
3
\end{ytableau} \, \,\end{pmatrix}$ \\ \\
\hline \\ [-1.5ex]
$(k,l)=(1,4)$ & $\begin{pmatrix} \, \begin{ytableau}
1   \\
2 \\
3 \\
*(black!20) 4
\end{ytableau} \, , \,
\begin{ytableau}
1 & 2 &   3 & *(black!20) 4
\end{ytableau} \,\,\end{pmatrix}$  & $\longmapsto$ 
& 
$\begin{pmatrix} \, \begin{ytableau}
1 & 2 \\
3 \\
*(black!20) 4
\end{ytableau} \, , \,
\begin{ytableau}
1 & 2 & *(black!20) 4 \\
3 
\end{ytableau} \, \,\end{pmatrix}$ \\ \\
\hline \\ [-1.5ex]
$(k,l)=(2,4)$ & $\begin{pmatrix} \, \begin{ytableau}
1 & 2  \\
3 \\
*(black!20) 4
\end{ytableau} \, , \,
\begin{ytableau}
1 & 2 &   3 & *(black!20) 4
\end{ytableau} \,\,\end{pmatrix}$  & $\longmapsto$ 
& 
$\begin{pmatrix} \, \begin{ytableau}
1 & 2 & 3\\
*(black!20) 4
\end{ytableau} \, , \,
\begin{ytableau}
1 & 2 & *(black!20) 4 \\
3 \\
\end{ytableau} \, \,\end{pmatrix}$ \\ \\
& $\begin{pmatrix} \, \begin{ytableau}
1  & *(black!20) 4  \\
2 \\
3
\end{ytableau} \, , \,
\begin{ytableau}
1 & 2 &   3 & *(black!20) 4\\
\end{ytableau} \,\,\end{pmatrix}$  & $\longmapsto$ 
& 
$\begin{pmatrix} \, \begin{ytableau}
1 & 2 & *(black!20) 4\\
3
\end{ytableau} \, , \,
\begin{ytableau}
1 & 3 & *(black!20) 4\\
2 \\
\end{ytableau} \, \,\end{pmatrix}$ \\ \\
& $\begin{pmatrix} \, \begin{ytableau}
1  & 3 \\
2 \\
*(black!20) 4
\end{ytableau} \, , \,
\begin{ytableau}
1 & 2 &   3 &  *(black!20) 4\\
\end{ytableau} \,\,\end{pmatrix}$  & $\longmapsto$ 
& 
$\begin{pmatrix} \, \begin{ytableau}
1 & 2 & 3\\
*(black!20) 4
\end{ytableau} \, , \,
\begin{ytableau}
1 & 3 & *(black!20) 4\\
2 \\
\end{ytableau} \, \,\end{pmatrix}$ \\
\end{tabular}

\caption{The base step of the induction in the proof of Theorem~\ref{thm:log-conc_hooks}: the maps $\varphi_{n,k,l}$ for $n=3$ and $n=4$ and all allowed values for $k$ and $l$.
The box containing the largest element is marked in gray in every SYT and we can see that a pair of SYT of a given type is always mapped to a pair of SYT of the same type. }
\label{fig:hooks_induction_start}
\end{figure}

Let us turn to the induction step and assume that the injective functions $\varphi_{n-1,k,l}$ for $1 \leq k  \leq l-2 \leq n-3$ have already been constructed.
The definition of the function $\varphi_{n,k,k+2}$ depends on the type of the pair of Tableaux it is applied to and we have two different rules:
\begin{enumerate}
\item Type $\downarrow \rightarrow$:

This is the easy case: For a pair $(T_1,T_2) \in H_{n,k} \times H_{n,k+2}$ of type $\downarrow \rightarrow$ we can take the element $n$ in $T_1$ and move it to the end of the first row in $T_1$ in order to obtain a Tableau $U_1$ where the first row has length $k+1$. 
Similarly, in $T_2$ we can take the element $n$ and move it to the end of the first column in order to obtain a Tableau $U_2$ where the first row has length $k+1$. 
Now the type of $(U_1, U_2)$ is $\rightarrow \downarrow$, so we define $\varphi_{n,k,l}(T_1, T_2)$ to be $(U_2,U_1)$.
For an example of the map $\varphi_{n,k,k+2}$ in this case for $n=5$, see Figure~\ref{fig:example_hooks_n=5}.

\item Other type:

When the type is not $\downarrow \rightarrow$ it is less obvious how to define the map $\varphi_{n,k,k+2}$.
Here we make use of the maps $\varphi_{n-1,k,l}$ for $1 \leq k  \leq l-2 \leq n-3$ that exist by induction hypothesis.
The function $\varphi_{n,k,k+2}$ is then defined as follows for a pair $(T_1,T_2) \in H_{n,k} \times H_{n,k+2}$: First we remove the element $n$ both in $T_1$ and in $T_2$. 
If the type is $\downarrow \downarrow $ we obtain a pair $(t_1,t_2) \in H_{n-1,k} \times H_{n-1,k+2}$, if it is $\rightarrow \downarrow$ we obtain a pair $(t_1,t_2) \in H_{n-1,k-1} \times H_{n-1,k+2}$ and if it is $\rightarrow \rightarrow$ we obtain a pair $(t_1,t_2) \in H_{n-1,k-1} \times H_{n-1,k+1}$.
In all three cases we can apply one of the maps $\varphi_{n-1,k,l}$ with  $1 \leq k \leq l-2 \leq n-3$ for suitable values of $k$ and $l$.
We do so and obtain a pair $(u_1,u_2)$ which is in $H_{n-1,k+1} \times H_{n-1,k+1}$ for type $\downarrow \downarrow$, in $H_{n-1,k} \times H_{n-1,k+1}$ for type $\rightarrow \downarrow$ and in  $H_{n-1,k} \times H_{n-1,k}$ for type $\rightarrow \rightarrow$.
Finally, we replace the element $n$ in both Tableaux of the pair $(u_1,u_2)$ according to its original positions in $(T_1,T_2)$, thus creating a pair of Tableaux with $n$ boxes of the same type as $(T_1,T_2)$.
For all three types, replacing the element $n$ in its original position will lead to a pair $(U_1, U_2) \in H_{n,k+1} \times H_{n,k+1}$.
For an example of the map $\varphi_{n,k,k+2}$ for a pair of type $\rightarrow \rightarrow$ and  for $n=5$, see Figure~\ref{fig:example_hooks_n=5}.
\end{enumerate}
The construction described above ensures that the type of the image $(U_1,U_2)$ under the map $\varphi_{n,k,k+2}$ will always be the same as the one of $(T_1,T_2)$.
Thus, in order to prove that the map $\varphi_{n,k,k+2}$ is injective, it suffices to prove that two distinct pairs $(T_1, T_2)$ and $(S_1,S_2)$ of SYT with $n$ boxes that are of the same type cannot have the same image.
First, let us take a look at the case of pairs of type $\downarrow \rightarrow$: If the image of two pairs $(T_1, T_2)$ and $(S_1,S_2)$ of type $\downarrow \rightarrow$ is the same, this means that the two pairs have to be the same if we remove the element $n$ in all involved SYT. 
Since $(T_1, T_2)$  and $(S_1,S_2)$ are both of type $\downarrow \rightarrow$ it follows that $(T_1, T_2)=(S_1,S_2)$.
Second, the case of pairs of other type: the argument is similar here. 
For two pairs $(T_1, T_2)$ and $(S_1,S_2)$ let us denote by $(t_1,t_2)$ and $(s_1,s_2)$ the corresponding pairs of SYT where the element $n$ has been removed.
If the image of $(T_1, T_2)$ and $(S_1,S_2)$ is the same, this means that the image of $(t_1,t_2)$ and $(s_1,s_2)$ has to be the same.
The image of $(t_1,t_2)$ and $(s_1,s_2)$ is given by one of the maps $\varphi_{n-1,k,l}$ for suitable $k$ and $l$.
By the induction hypothesis these maps are injective and thus the image of $(t_1,t_2)$ and $(s_1,s_2)$ can only be the same if $(t_1,t_2)=(s_1,s_2)$
This in turn implies that $(T_1, T_2)=(S_1,S_2)$.
This finishes the proof.

\begin{figure}
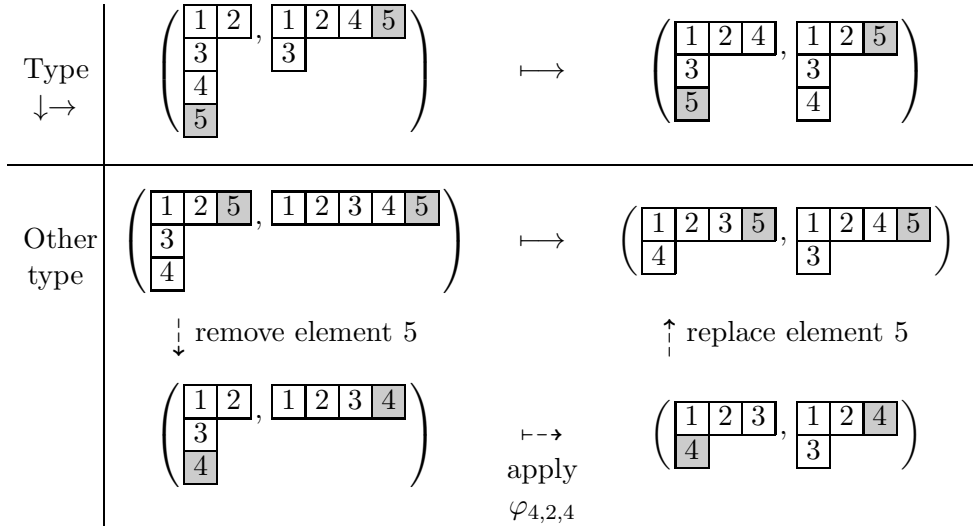

\centering
\ytableausetup{aligntableaux=top,boxsize=1.1em
}
\begin{tabular}{P{0.85cm}|cP{1.1cm}c}
Type $\da \ra$ & $\begin{pmatrix} \, \begin{ytableau}
1 & 2  \\
3 \\
4 \\
*(black!20) 5
\end{ytableau} \, , \,
\begin{ytableau}
1 & 2 & 4 & *(black!20) 5 \\
3
\end{ytableau} \, \,\end{pmatrix}$  & $\longmapsto$ 
& 
$\begin{pmatrix} \, \begin{ytableau}
1  & 2  & 4\\
3 \\
*(black!20) 5
\end{ytableau} \, , \,
\begin{ytableau}
1 & 2 & *(black!20) 5 \\
3 \\
4
\end{ytableau} \, \, \end{pmatrix}$ \\
[6ex] \hline \\[-1ex]

Other type & $\begin{pmatrix} \, \begin{ytableau}
1 & 2 & *(black!20) 5\\
3 \\
4 \\
\end{ytableau} \, , \,
\begin{ytableau}
1 & 2 & 3 & 4 & *(black!20) 5 \\
\end{ytableau} \,\,\end{pmatrix}$ & $\longmapsto$ 
& 
$\begin{pmatrix} \, \begin{ytableau}
1  & 2  & 3 & *(black!20) 5 \\
4 \\
\end{ytableau} \, , \,
\begin{ytableau}
1 & 2 & 4 & *(black!20) 5 \\
3 \\
\end{ytableau} \, \,\end{pmatrix}$ \\
[4.5ex]
&\textcolor{black}{$\dashdownarrow$ remove element 5} &&   \textcolor{black}{$\dashuparrow$ replace element 5}\\ [2.5ex]
& \textcolor{black}{
$\begin{pmatrix} \, \begin{ytableau}
1 & 2 \\
3 \\
*(black!20) 4 \\
\end{ytableau} \, , \,
\begin{ytableau}
1 & 2 & 3 &  *(black!20) 4 \\
\end{ytableau} \, \,\end{pmatrix}$}  & \textcolor{black}{$\dashmapsto$ apply  $\varphi_{4,2,4}$}
& \textcolor{black}{
$\begin{pmatrix} \, \begin{ytableau}
1  & 2  & 3  \\
*(black!20) 4 \\
\end{ytableau} \, , \,
\begin{ytableau}
1 & 2 & *(black!20) 4 \\
3 \\
\end{ytableau} \, \,\end{pmatrix}$} \\
\end{tabular}
\caption{Two examples of the injective map $\varphi_{5,k,l}$ described in the proof of Theorem~\ref{thm:log-conc_hooks}.}
\label{fig:example_hooks_n=5}
\end{figure}
\end{proof}

Now let us turn from SYT to permutations: 
Pairs of hooks of size $n$ and with a first row of length $k$ bijectively correspond to permutations of length $n$ and with a longest increasing sequence of length $k$ and a longest decreasing sequence of length $n-k+1$.
That is, these permutations are the merge of an increasing and a decreasing subsequence having one point in common.
Let us denote these numbers by $m_{n,k}$.
Then Theorems~\ref{thm:involutions_to_arbirary_permutations} and~\ref{thm:log-conc_hooks} lead to the follwoing:

\begin{corollary}
For every positive integer $n$ the sequence $(m_{n,k})_{1 \leq k \leq n}$ is log-concave.
\end{corollary}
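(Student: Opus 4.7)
The plan is to combine Theorem~\ref{thm:log-conc_hooks} with the RSK-lifting technique from the proof of Theorem~\ref{thm:involutions_to_arbirary_permutations}. First I would identify the objects combinatorially via the Robinson-Schensted correspondence: a permutation $p$ of length $n$ satisfies $\ell(p) = k$ and has longest decreasing subsequence of length $n-k+1$ precisely when both $P(p)$ and $Q(p)$ are hook-shaped SYT with first row of length $k$. Thus the set of permutations counted by $m_{n,k}$ corresponds bijectively to $H_{n,k} \times H_{n,k}$, while skew-merged involutions correspond to the single set $H_{n,k}$ itself; in particular $m_{n,k} = h_{n,k}^2$.

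Next I would apply the strategy from the proof of Theorem~\ref{thm:involutions_to_arbirary_permutations} verbatim to this restricted class. The injective maps $\varphi_{n,k-1,k+1}\colon H_{n,k-1} \times H_{n,k+1} \to H_{n,k} \times H_{n,k}$ from Theorem~\ref{thm:log-conc_hooks} play the role that $f$ plays in that earlier proof. Given $p_1, p_2$ counted by $m_{n,k-1}$ and $m_{n,k+1}$ respectively with RSK images $(P_1, Q_1)$ and $(P_2, Q_2)$, apply $\varphi_{n,k-1,k+1}$ separately to $(P_1, P_2)$ and to $(Q_1, Q_2)$, obtaining two pairs in $H_{n,k} \times H_{n,k}$. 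Reassembling each of these into a permutation by inverse RSK yields an element of $M_{n,k} \times M_{n,k}$, where $M_{n,k}$ denotes the set of permutations counted by $m_{n,k}$. Injectivity of the overall map is inherited from both $\varphi_{n,k-1,k+1}$ and the bijectivity of RSK.

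There is essentially no obstacle here: the corollary is a direct combination of the two theorems already proved. As a sanity check, the explicit formula $m_{n,k} = \binom{n-1}{k-1}^2$ makes the log-concavity transparent algebraically as well, since the pointwise square of a log-concave sequence of positive reals is again log-concave.
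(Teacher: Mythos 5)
Your proposal is correct and follows essentially the same route as the paper, which simply derives this corollary by combining Theorem~\ref{thm:involutions_to_arbirary_permutations} (the RSK lifting of injections on single tableaux to injections on pairs) with the injections $\varphi_{n,k,l}$ from Theorem~\ref{thm:log-conc_hooks}. Your identification of $M_{n,k}$ with $H_{n,k}\times H_{n,k}$ and the observation that $m_{n,k}=\binom{n-1}{k-1}^2$ are exactly the intended content; note only that the inverse RSK step works here because a hook of size $n$ is determined by its first-row length, so the two tableaux produced by $\varphi_{n,k-1,k+1}$ automatically have the same shape.
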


Permutations which are the merge of an increasing and a decreasing sequence are called \textit{skew-merged} permutations.
They have been shown to be exactly those permutations avoiding the two patterns $2143$ and $3412$ \cite{stankova1994forbidden}. 
However, not all skew-merged permutations correspond to hook-shaped SYT.
Indeed, a skew-merged permutation of length $n$ can consist of a longest increasing subsequence of length $k$ and of a longest decreasing subsequence of length $n-k$, i.e., the two sequences do not intersect.
The shape of the corresponding tableau is then not a hook but a hook with an additional box at position $(2,2)$.
Note however that not all pairs of SYT of this shape actually correspond to skew-merged permutations.

For skew-merged involutions the situation appears to be simpler and we can prove the following result:

\begin{proposition}
The SYT associated to a skew-merged involution is always hook-shaped.
\label{thm:SMinvol_hook-shaped}
\end{proposition}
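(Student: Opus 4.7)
The plan is in two steps: first characterise the $2$-cycle structure of a skew-merged involution, then deduce that the shape of the associated SYT must be a hook.

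For the first step, I would show that any two $2$-cycles of a skew-merged involution must be \emph{nested}. Let $\{i_1,j_1\}$ and $\{i_2,j_2\}$ be two distinct $2$-cycles of an involution $p$, with $i_k<j_k$ and, without loss of generality, $i_1<i_2$. Then exactly one of three configurations occurs: $j_1<i_2$ (disjoint), $i_2<j_1<j_2$ (linked), or $j_2<j_1$ (nested). A direct check shows that in the disjoint case the positions $i_1,j_1,i_2,j_2$ carry the values $j_1,i_1,j_2,i_2$, exhibiting the pattern $2143$, while in the linked case the positions $i_1,i_2,j_1,j_2$ carry the values $j_1,j_2,i_1,i_2$, exhibiting the pattern $3412$. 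Since a skew-merged permutation avoids both $2143$ and $3412$ by Stankova's theorem (cited in the paper), only the nested case can arise. Hence $p$ is determined by positions $i_1<i_2<\cdots<i_m<j_m<\cdots<j_1$ whose pairs $(i_k,j_k)$ are the $2$-cycles of $p$, together with $f=n-2m$ fixed points.

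For the second step, let $\lambda$ denote the shape of $P(p)$; recall that $\lambda_1$ is the length of the longest increasing subsequence of $p$ and $\lambda'_1$ is the length of the longest decreasing subsequence. I would prove $\lambda_1+\lambda'_1\ge n+1$, which combined with the elementary bound $\lambda_1+\lambda'_1\le n+1$ (the first row and first column together contain $\lambda_1+\lambda'_1-1\le n$ boxes, with equality exactly for hook shapes) forces $\lambda$ to be a hook. Reading the values at the positions $i_1<\cdots<i_m<j_m<\cdots<j_1$ gives the decreasing sequence $j_1>\cdots>j_m>i_m>\cdots>i_1$, so $\lambda'_1\ge 2m$; and the $f$ fixed points form an increasing subsequence, so $\lambda_1\ge f$. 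To gain the extra unit I would split into cases. If there is a fixed point $a$ with $i_m<a<j_m$, then splicing $a$ between $j_m$ and $i_m$ in the decreasing sequence above yields a decreasing subsequence of length $2m+1$, giving $\lambda'_1\ge 2m+1$. Otherwise $(i_m,j_m)$ contains no fixed point, and then concatenating the fixed points at positions $<i_m$, the entry at position $i_m$ (with value $j_m$), and the fixed points at positions $>j_m$ produces an increasing subsequence of length $f+1$, giving $\lambda_1\ge f+1$. In either case $\lambda_1+\lambda'_1\ge n+1$. The boundary case $m=0$ is immediate: $p$ is the identity, with shape $(n)$.

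The first step is a short pattern verification. The main work lies in Step 2, and the chief care is to confirm that the element spliced in to obtain the extra unit — either a central fixed point adjoined to the long decreasing sequence, or the innermost $2$-cycle entry adjoined to the increasing sequence of fixed points — is indeed inserted in a manner compatible with both the position and the value ordering of the existing monotone subsequence.
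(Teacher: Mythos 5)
Your proof is correct, and it takes a genuinely different route from the paper's. The paper works inside Atkinson's five-colour decomposition of skew-merged permutations: it assumes the shape is not a hook (equivalently, that the permutation has no white points), invokes Atkinson's lemma producing a $3142$- or $2413$-pattern occupying two \emph{adjacent} middle positions, and then uses the involution property in a three-way case analysis to manufacture a forbidden $3412$ or $2143$. You instead argue from the cycle structure: avoidance of $2143$ and $3412$ forces all $2$-cycles of a skew-merged involution to be mutually nested (your two pattern verifications check out), and then the decreasing subsequence $j_1>\dots>j_m>i_m>\dots>i_1$ together with the increasing subsequence of fixed points, augmented by one extra element, gives $\lambda_1+\lambda_1'\ge 2m+f+1=n+1$, which combined with the trivial bound $\lambda_1+\lambda_1'\le n+1$ forces a hook. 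Your two cases are exhaustive and the splices are sound: in the first case the fixed point $a\in(i_m,j_m)$ has value strictly between $i_m$ and $j_m$, so it extends the decreasing chain to length $2m+1$; in the second case all $f$ fixed points lie outside $[i_m,j_m]$, so inserting the entry $j_m$ at position $i_m$ between the two blocks of fixed points gives an increasing chain of length $f+1$; and $m=0$ is the identity. What your argument buys is self-containedness --- it needs only Stankova's $\{2143,3412\}$ characterization and Schensted's theorem for rows and columns, not Atkinson's finer structural lemma --- and it yields as a by-product a clean description of skew-merged involutions as nested matchings plus fixed points. The paper's route stays within the white-point framework that it also uses to discuss which skew-merged permutations in general correspond to hook shapes.
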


From this we finally obtain the following result about skew-merged involutions:

\begin{corollary}
The number of skew-merged involutions of length $n$ and with longest increasing sequence of length $k$ is:
\[
i_{n,k}= \binom{n-1}{k-1}.
\]
Thus the total number $i_n$ of skew-merged involutions is simply $2^{n-1}$ and the sequence $(i_{n,k})_{1 \leq k \leq n}$ is log-concave.
\end{corollary}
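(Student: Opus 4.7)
The plan is to combine Proposition~\ref{thm:SMinvol_hook-shaped} with the Robinson--Schensted correspondence so that counting skew-merged involutions reduces to counting hook-shaped SYT, a count that was already carried out at the start of the proof of Theorem~\ref{thm:log-conc_hooks}. Once the formula $i_{n,k}=\binom{n-1}{k-1}$ is in hand, the total count and the log-concavity statement will both follow without further work.

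First, I would set up the bijection. Recall from the introduction that, under Robinson--Schensted, an involution of length $n$ is identified with a single SYT of size $n$. By Proposition~\ref{thm:SMinvol_hook-shaped}, the SYT attached to a skew-merged involution is a hook. Conversely, I would check that every hook-shaped SYT of size $n$ whose first row has length $k$ comes from a skew-merged involution with longest increasing subsequence of length $k$: the length of the longest increasing subsequence equals the length $k$ of the first row of the hook, and the length of the longest decreasing subsequence equals the length $n-k+1$ of the first column. Since $k+(n-k+1)=n+1$, the union of a longest increasing and a longest decreasing subsequence covers all $n$ positions of the permutation (sharing exactly one element), so the permutation is skew-merged. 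This identifies $i_{n,k}$ with $h_{n,k}$, and the opening calculation in the proof of Theorem~\ref{thm:log-conc_hooks} gives
\[
i_{n,k} \;=\; h_{n,k} \;=\; \binom{n-1}{k-1}.
\]

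For the remaining two claims, I would sum over $k$ to get
\[
i_n \;=\; \sum_{k=1}^{n} \binom{n-1}{k-1} \;=\; 2^{n-1},
\]
and I would deduce log-concavity of $(i_{n,k})_{1\le k\le n}$ directly from Theorem~\ref{thm:log-conc_hooks}, since $i_{n,k}=h_{n,k}$. (Alternatively, one can note that any row of Pascal's triangle is log-concave.)

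The only genuine content is the converse half of the bijection in the first step; everything else is bookkeeping. I do not expect any real obstacle, because the hook shape forces the two extremal monotone subsequences to have lengths summing to $n+1$, which is exactly the skew-merged condition with a shared pivot. The rest is an application of results already proved.
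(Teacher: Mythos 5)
Your proposal is correct and follows essentially the same (implicit) route as the paper: combine Proposition~\ref{thm:SMinvol_hook-shaped} with the identification of involutions with single SYT under Robinson--Schensted, and then invoke the count $h_{n,k}=\binom{n-1}{k-1}$ from the proof of Theorem~\ref{thm:log-conc_hooks}. Your verification of the converse direction (a hook shape forces $\mathrm{LIS}+\mathrm{LDS}=n+1$, so any longest increasing and longest decreasing subsequence meet in exactly one point and cover everything) is sound and is exactly the observation the paper makes, without proof, just before Proposition~\ref{thm:SMinvol_hook-shaped}.
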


\begin{proof}[Proof of Proposition \ref{thm:SMinvol_hook-shaped}]
In the following, we will use the notation introduced by Atkinson in~\cite{atkinson1998permutations} and will apply one of the intermediary Lemmas proven there.

Atkinson views a permutation $\sigma$ as the set of points  $(i,\sigma(i))$ in the plane.
If $\sigma$ is skew-merged the points can be partitioned into  five (possibly empty) sets: there are red, blue, green, yellow and white points as represented in Figure~\ref{fig:colors_skew_merged}.
The red and yellow points are decreasing, the green and blue ones are increasing and the white points are either increasing or decreasing.

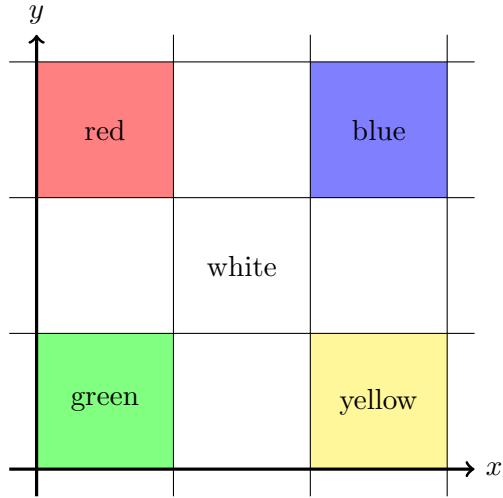
\begin{figure}
\begin{center}
\begin{tikzpicture}[scale=0.6,x=3cm,y=3cm]

  \def\xmin{-.2}
  \def\xmax{3.2}
  \def\ymin{-0.2}
  \def\ymax{3.2}
  
  \fill [green!50] (0,0) rectangle (1,1);
\fill [red!50] (0,2) rectangle (1,3);
\fill [yellow!50] (2,0) rectangle (3,1);
\fill [blue!50] (2,2) rectangle (3,3);
  
  \draw[->, very thick] (\xmin,0) -- (\xmax,0) node[right] {$x$};
  \draw[->, very thick] (0,\ymin) -- (0,\ymax) node[above] {$y$};
  
  \draw[-] (\xmin,1) -- (\xmax,1);
  \draw[-] (\xmin,2) -- (\xmax,2);
  \draw[-] (\xmin,3) -- (\xmax,3);
  \draw[-] (1,\ymin) -- (1,\ymax);
  \draw[-] (2,\ymin) -- (2,\ymax);
  \draw[-] (3,\ymin) -- (3,\ymax);

\node at (0.5,0.5) {green};
\node at (0.5,2.5) {red};
\node at (1.5,1.5) {white}; 
\node at (2.5,0.5) {yellow};
\node at (2.5,2.5) {blue};
\end{tikzpicture}
\end{center}
\caption{The disposition of colors in a skew-merged permutation due to Atkinson~\cite{atkinson1998permutations}.}
\label{fig:colors_skew_merged}
\end{figure}

The points that are of particular interest to us are the white ones.
White points can be defined as follows: Whenever one chooses two points $(i,r)$ and $(j,s)$ that are not of the same color and are to the left (or to the right) of a white point $(k,t)$, i.e. $i,j<k$ (or $k> i,j$), $t$ is neither the largest nor the smallest among the elements $r, s$ and $t$.
Skew-merged permutations with at least one white point are exactly those which are the union of an increasing subsequence $\alpha$ and a decreasing subsequence $\beta$ that have a common point.
Thus skew-merged permutations with at least one white point correspond to hook-shaped SYT.
Conversely skew-merged permutations with no white points are those where the associated SYT is not  a hook but a hook with an additional box at position $(2,2)$.

The goal of this proof is to show that a skew-merged permutation with no white elements cannot be an involution.

For this, we assume that the skew-merged involution $\sigma$ has no white elements and construct a contradiction to the fact that $\sigma$ is skew-merged.

In order to so, we will need a slightly weaker version of one of the two assertions in Lemma 11 in~\cite{atkinson1998permutations}. 
We shall state it here in the form we need it:
\begin{quote} \textbf{Lemma} \cite{atkinson1998permutations}:
\emph{Suppose that $\sigma$ is a skew-merged permutation of length $n$  with no white points. 
Then there exist indices $1 \leq i < j< j+1< k \leq n$ such that one of the following two statements holds:
\begin{itemize}
\item $\sigma(i) \sigma (j) \sigma (j+1) \sigma (k)$ forms a 3142-pattern in $\sigma$
\item $\sigma(i) \sigma (j) \sigma (j+1) \sigma (k)$ forms a 2413-pattern in $\sigma$
\end{itemize}  }
\end{quote}

We will now show the following: If $\sigma(i) \sigma (j) \sigma (j+1) \sigma (k)$ forms a 3142-pattern then $\sigma$ also contains the pattern 3412 or 2143 and is thus not skew-merged.
In order to do so we distinguish three different cases.
To alleviate notation, we will write $cadb$ instead of $\sigma(i) \sigma (j) \sigma (j+1) \sigma (k)$ and will keep in mind that $a<b<c<d$.
\begin{enumerate}
\item $d \leq j+1$:

This implies that $a \leq d-3 \leq j-2$ and that $b \leq d-2 \leq j-1$.
Especially this means that $a$ and $b$ cannot be fixed points in $\sigma$.
Since $\sigma$ is an involution we thus have $\sigma(a)=j$ and $\sigma(b)=k$.
Since $a < b< j<k$, we have $jkab$ as a subsequence of $\sigma$ that forms a $3412$-pattern.
\item $a \geq j$:

This implies that $c \geq a+2 \geq j+2$ and that $d \geq a+3 \geq j+3$.
Especially this means that $c$ and $d$ cannot be fixed points in $\sigma$.
Since $\sigma$ is an involution we thus have $\sigma(c)=i$ and $\sigma(d)=j+1$.
Since $i < j+1 < c< d$, we have $cda(j+1)$ as a subsequence of $\sigma$ that forms a $3412$-pattern.
\item$d > j+1$ and $a <j$:

In this case $a$ and $d$ aren't fixed points and since $\sigma$ is an involution we have $\sigma(a)=j$ and $\sigma(d)=j+1$.
Since $a<j<j+1<d$, we have $j a d (j+1)$ as a subsequence of $\sigma$ that forms a $2143$-pattern.
\end{enumerate}
Note that it is crucial in the arguments above that the elements $a$ and $d$ are adjacent in $\sigma$ which is due to the fact that there are no white points in $\sigma$.

If the second statement of the Lemma above is fulfilled, that is if $\sigma(i) \sigma (j)$ $\sigma (j+1) \sigma (k)$ forms a 2413-pattern, we consider the reversed permutation $\sigma ^r$, i.e., the permutation $\sigma$ read from right to left.
The permutation $\sigma ^r$ then contains the pattern 3142 at the positions $(n+1-k), (n-j), (n+1-j)$ and $(n+1-i)$. 
This implies that $\sigma ^r$ also contains 3412 or 2143 as a pattern and is not skew-merged.
Remark that a permutation is skew-merged exactly then when its reverse is skew-merged.
Thus we conclude that $\sigma$ is not skew-merged in this case either.

We have thus proven that a skew-merged involution must contain at least one white point and the shape of its associated SYT is a hook.
\end{proof}

\subsection{The set of $(l,m)$-protected SYT}

\begin{definition}
Given an arbitrary SYT $T$, it can be decomposed into two parts in a unique way as follows:
The \emph{protected area} of $T$ is obtained from $T$ by removing as many boxes as possible in the first row and in the first column of $T$ without creating a shape that is no longer a Ferrers diagram.
The removed elements form the \emph{surplus} of $T$.
The elements that have been removed in the first row are referred to as the \emph{eastern surplus} and the ones in the first column as the \emph{southern surplus}.
\label{def:protected_hook}
\end{definition}

For an illustration of this decomposition of a SYT into \textit{protected area} and \textit{surplus}, see Figure~\ref{fig:protected_hook}.
Note that if $T$ is a hook in the sense of Definition~\ref{def:hook}, then the protected area of $T$ consists of the box containing the element $1$ only.

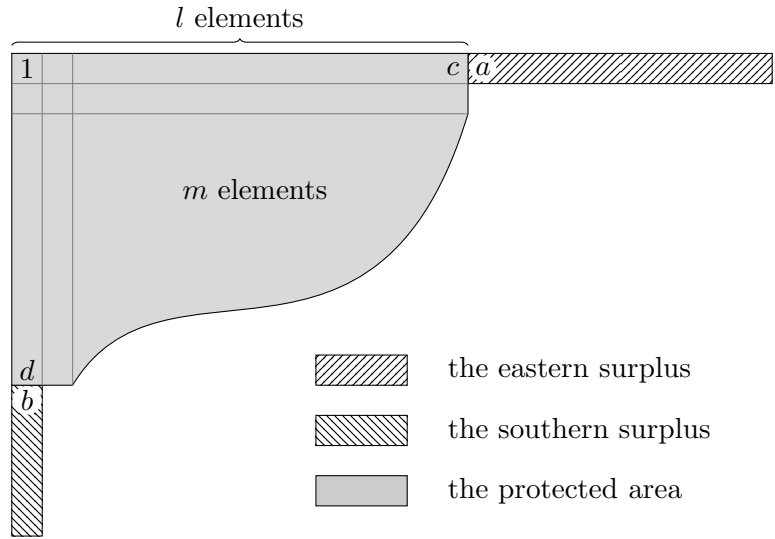
\begin{figure}
\begin{center}
\begin{center}
\begin{tikzpicture}[scale=0.4]
\fill[pattern=north east lines, pattern color=black, draw=black] (15,0) rectangle (25,1);
\fill[pattern=north west lines, pattern color=black, draw=black] (0,-15) rectangle (1,-10);
\fill[black!15, draw=black] (0,1) -- (15,1) -- (15,-1) .. controls (12,-11) and (5,-5) .. (2,-10) -- (0,-10)  -- cycle;
\node at (0.5,0.5) {1};
\draw[draw=black!50] (0,0) -- (15,0);
\draw[draw=black!50] (0,-1) -- (15,-1);
\draw[draw=black!50] (1,1) -- (1,-10);
\draw[draw=black!50] (2,1) -- (2,-10);
\node at (8,-3.5) {$m$ elements};
\draw [decorate,decoration={brace,mirror,raise=3pt}] (15,1) -- (0,1) node [midway,above=6pt] {$l$ elements};

\fill[pattern=north east lines, pattern color=black, draw=black] (10,-10) rectangle (13,-9);
\node[anchor=west] at (14,-9.5) {the eastern surplus};
\fill[pattern=north west lines, pattern color=black, draw=black] (10,-12) rectangle (13,-11);
\node[anchor=west] at (14,-11.5) {the southern surplus};
\fill[black!20, draw=black] (10,-14) rectangle (13,-13);
\node[anchor=west] at (14,-13.5) {the protected area};

\node[circle, fill=white, inner sep=0.3mm] at (15.5,0.5) {$a$};
\node at (14.5,0.5) {$c$};

\node at (0.5,-9.5) {$d$};
\node[circle, fill=white, inner sep=0.25mm] at (0.5,-10.5) {$b$};

\end{tikzpicture}
\end{center}
\end{center}
\caption{The decomposition of a SYT into its \emph{surplus} and \emph{protected area}. The protected area consists of $m$ elements of which $l$ are in the first row.}
\label{fig:protected_hook}
\end{figure}

We can now define the following class of SYT:

\begin{definition}
Let $T$ be a SYT in which all elements contained in its surplus are larger than all elements contained in the first row and first column of its protected area. 
If $T$ has a protected area of size $m$ of which $l$ elements are contained in the first row it is called \emph{$(l,m)$-protected}.
\end{definition}

For an example of an $(l,m)$-protected SYT with $l=4$ and $m=12$, see the top part of Figure~\ref{fig:example_protected} where the protected areas have been marked in gray.

Let $a$, $b$, $c$ and $d$ be the elements of $T$ as displayed in Figure~\ref{fig:protected_hook}, i.e., $a$ is the smallest element in the eastern surplus and $c$ is its left neighbour in $T$, $b$ is the smallest element in the southern surplus and $d$ is its top neighbour in $T$.
Then the condition that all elements in the surplus are larger than those in the first row and first column of the protected area is equivalent to:
\begin{align}
\min(a,b) > \max(c,d).
\label{eqn:condition_protected}
\end{align}

Before we tackle our conjecture for the set of $(l,m)$-protected SYT, let us consider the special case of $(2,4)$-protected SYT and take a closer look at the set of these SYT.
For $(2,4)$-protected SYT, i.e., SYT whose shape is a hook with an additional box at the position $(2,2)$, condition~\eqref{eqn:condition_protected} is fulfilled if and only if the elements $1$, $2$, and $3$ are contained in the protected area.
Equivalently, the elements $1$, $2$, and $3$ may not be contained in the same row or column of the SYT.
Turning to involutions, this translates as follows:
Involutions that correspond to $(2,4)$-protected SYT have the property that the lengths of the longest increasing and of the longest decreasing sequence add up to the total length of the permutation and the elements 1, 2, and 3 do not form a 123- or a 321- pattern.

A question that is of interest here is the following: how many SYT whose shape is a hook with an additional box at the position $(2,2)$ actually fulfil condition~\eqref{eqn:condition_protected}?
We will see that this is the case for roughly half of the SYT of this shape.
\begin{proposition}
Let $p_n$ denote the number of $(2,4)$-protected SYT of size $n$ and let $b_n$ denote the number of SYT whose shape is a hook with an additional box at the position $(2,2)$.
Then the following holds for all $n \geq 4$:
\[
p_n=(n-3)2^{n-3}, \, b_n= (n-4)2^{n-2}+2 \text{ and thus} \lim_{n \to \infty}\frac{p_n}{b_n}= \frac{1}{2}.
\]
\end{proposition}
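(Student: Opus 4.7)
The plan is to compute $p_n$ and $b_n$ in closed form by working directly with the shape $(k,2,1^{n-k-2})$ for $2\le k\le n-2$ (the general shape of a hook with an extra box at $(2,2)$) and then pass to the limit.

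For $p_n$ the key observation is that condition~\eqref{eqn:condition_protected} forces the three entries in the first row or first column of the protected area --- those at $(1,1)$, $(1,2)$ and $(2,1)$ --- to be strictly smaller than all $n-4$ surplus entries, so these cells must contain precisely $\{1,2,3\}$. Since $(1,1)=1$ always, this yields $\{(1,2),(2,1)\}=\{2,3\}$ in one of two orders. The entry at $(2,2)$ must exceed both (automatic) and so may be any of the $n-3$ elements of $\{4,\dots,n\}$. The remaining $n-4$ elements then split into the eastern surplus (of size $k-2$, in increasing order along row 1) and the southern surplus (of size $n-k-2$, in increasing order down column 1), contributing $\binom{n-4}{k-2}$ many configurations. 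Summing over $k$ gives
\[
p_n \;=\; 2(n-3)\sum_{k=2}^{n-2}\binom{n-4}{k-2} \;=\; 2(n-3)\cdot 2^{n-4} \;=\; (n-3)\,2^{n-3}.
\]

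For $b_n$ I drop the protection condition and encode an arbitrary SYT of shape $(k,2,1^{n-k-2})$ as a triple $(R',C',s)$, where $R'$ is the set of entries in row 1 other than $1$, $C'$ is the set of entries in column 1 other than $1$, and $s$ is the entry at $(2,2)$. Then $R'$, $C'$, $\{s\}$ partition $\{2,\dots,n\}$, both $R'$ and $C'$ are non-empty, and once the elements of $R'$ and $C'$ are placed in increasing order the only SYT conditions not already forced are $s>\min R'$ and $s>\min C'$. Thus $\sum_k f^{(k,2,1^{n-k-2})}$ counts all such triples. Reindexing by $s$, for fixed $s$ the elements of $A=\{2,\dots,s-1\}$ must be split so that both $R'\cap A$ and $C'\cap A$ are non-empty (contributing $2^{s-2}-2$ partitions of $A$), while the elements of $B=\{s+1,\dots,n\}$ may be distributed freely (contributing $2^{n-s}$ partitions of $B$); the terms with $s\in\{2,3\}$ vanish. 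A short geometric-sum computation then yields
\[
b_n \;=\; \sum_{s=4}^{n}(2^{s-2}-2)\,2^{n-s} \;=\; (n-3)\,2^{n-2}-2\bigl(2^{n-3}-1\bigr) \;=\; (n-4)\,2^{n-2}+2.
\]

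The limit $p_n/b_n\to 1/2$ then follows by dividing the two closed forms and retaining leading terms. The only step that requires genuine care is the second encoding: I must verify that the SYT constraints on the shape $(k,2,1^{n-k-2})$ really do reduce to $s>\min R'$ and $s>\min C'$ with no hidden condition linking the rest of the row and column to $s$, after which everything becomes elementary binomial manipulation.
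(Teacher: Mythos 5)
Your computation is correct, and both closed forms check out against small cases ($b_4=2$, $b_5=10$, $p_5=8$). The count of $p_n$ is essentially the paper's: force $\{2,3\}$ into the cells $(1,2)$ and $(2,1)$ (two orders), choose the entry at $(2,2)$ from $\{4,\dots,n\}$, and distribute the remaining $n-4$ entries freely between the two arms; you merely make the free distribution explicit as $\sum_{k}\binom{n-4}{k-2}=2^{n-4}$ where the paper says each element independently goes east or south. Where you genuinely diverge is $b_n$: the paper counts the complement $b_n-p_n$ (tableaux in which $1,2,3$ all lie in the first row or all in the first column), pivots on the entry $i$ at position $(2,1)$, obtains $2\sum_{i=4}^{n-1}(n-i)2^{n-i-1}$, and adds $p_n$ back; you count $b_n$ directly via the bijection with triples $(R',C',s)$, pivoting on the entry $s$ at $(2,2)$. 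Your encoding is sound: since row $2$ and column $2$ both have length $2$ in this shape, the cell $(2,2)$ has no neighbour to its right or below, so after sorting $R'$ and $C'$ the only non-automatic constraints are indeed $s>\min R'$ and $s>\min C'$ --- exactly the point you flagged. The direct count is arguably cleaner (no case split on where $1,2,3$ sit, no detour through $p_n$), at the cost of the small bookkeeping that the count is $0$ rather than $2^{s-2}-2$ at $s=2$, which you handle correctly by starting the sum at $s=4$. Both routes reduce to the same geometric sum and yield the same formulas, from which the limit $1/2$ is immediate.
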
 

\begin{proof}
Let us start by counting $(2,4)$-protected SYT.
We know that the elements $1$, $2$ and $3$ have to be present in the protected area and there are two different possibilities of arranging them.
Moreover we can choose the element $i$ that will lie in the box at position $(2,2)$ among the integers $4, \ldots, n$.
The remaining entries are then inserted one after the other in increasing order.
For every element we can choose to place it either in the eastern  or in the southern surplus of the tableau which gives us a total of $2^{n-4}$ possibilities. 
In total we have:
\[
p_n=2 \cdot (n-3)2^{n-4}=(n-3)2^{n-3}.
\]%

Now let us count SYT whose shape is a hook with an additional box at the position $(2,2)$ but that are not $(2,4)$-protected.
That is, we determine $b_n-p_n$.
This means that the elements $1$, $2$ and $3$ are all contained in the first row or in the first column of the SYT.
Let us concentrate on the case where $1$, $2$ and $3$ are all contained in the first row; the second case can then be obtained by symmetry.
Let us denote by $i$ the element at position $(2,1)$, i.e., to the south of $1$, and by $j$ the element at position $(2,2)$, i.e., to the east of $i$ and to the south of $2$.
The element $i$ can be any integer in $\left\lbrace 4, \ldots, n-1\right\rbrace$ and $j$ can be any integer in $\left\lbrace i+1, \ldots, n\right\rbrace$.
All elements that are smaller than $i$ have to be placed in the eastern surplus.
For the elements that are larger than $i$ and not equal to $j$, we can choose whether to place them in the eastern or southern surplus. There are thus $2^{n-i-1}$ possibilities of placing these elements.
In total we have:
\[
b_n-p_n= 2 \cdot  \sum_{i=4}^{n-1} (n-i) \cdot 2^{n-i-1},
\]
which leads to:
\begin{align*}
b_n & = \sum_{i=3}^{n-1} (n-i) \cdot 2^{n-i}  = 2^n \cdot \left( n \cdot \sum_{i=3}^{n-1} 2^{-i} - \sum_{i=3}^{n-1} i2^{-i} \right) \\
& = 2^n \cdot \left( n \cdot \left(\frac{1}{4}-\frac{1}{2^n}\right) - \frac{2^n -2-n}{2^n}  \right) = 2^{n-2}(n-4) +2. 
\end{align*}
We indeed obtain that $p_n$ is roughly one half of $b_n$ and that the fraction $p_n/b_n$ tends to $1/2$ when $n$ tends to infinity.
\end{proof}
Note that this result for the numbers $b_n$ can of course also be obtained by applying the hooklength formula (see Chapter 14 in \cite{handbook}).
However this leads to rather tedious manipulations of sums involving fractions of binomial coefficients and the approach above is much faster.

\medskip
Now let us turn to $(l,m)$-protected SYT for arbitrary integers $l$ and $m$. 
In the following, for $1 \leq m \leq n$ and $1 \leq l \leq k$, we will denote by $P_{n,k}^{(l,m)}$ the set of all $(l,m)$-protected SYT of size $n$ where the first row has length $k$.
Whenever it is clear from the context, we will write $P_{n,k}$ instead of $P_{n,k}^{(l,m)}$.
The cardinality of $P_{n,k}^{(l,m)}$ shall be denoted by $p_{n,k}^{(l,m)}$ or $p_{n,k}$. 

\begin{theorem}
For every positive integer $n$ and every fixed pair $(l,m)$, the sequence $(p_{n,k})_{1 \leq k \leq n}$ is log-concave.
\label{thm:log-concave_protected}
\end{theorem}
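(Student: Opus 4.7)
The strategy is to establish the closed-form factorization
\[
p_{n,k}^{(l,m)} \;=\; N(n)\cdot \binom{n-m}{k-l},
\]
in which $N(n)$ depends on $n$, $l$, and $m$ but not on $k$. Once this is proved, the log-concavity of $(p_{n,k})_{1\le k\le n}$ follows at once from the classical log-concavity of $\binom{n-m}{k-l}$ in $k$, since multiplying a log-concave sequence by a positive constant is log-concavity-preserving.

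To set up the factorization, I would decompose an $(l,m)$-protected SYT $T$ of size $n$ and first row of length $k$ into three pieces of data: (i) the shape $\mu$ of its protected area, a Ferrers diagram with $l$ boxes in the first row and $m$ boxes in total; (ii) an SYT filling of $\mu$ using some $m$-element set $S\subseteq[n]$, which determines both the set $A$ of values in the first row and first column of $\mu$ and the surplus value set $B=[n]\setminus S$; (iii) a choice of subset $B_E\subseteq B$ of size $k-l$ to be placed in increasing order along the eastern surplus, with $B_S=B\setminus B_E$ filling the southern surplus (also in increasing order).

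The key observation is that the $(l,m)$-protected condition $\max A<\min B$ ensures that \emph{every} choice of $B_E$ in step (iii) yields a valid SYT: the first row of $T$ is increasing because each element of $B_E$ strictly exceeds $\max A$, hence exceeds the last value in the first row of $\mu$, and the first column of $T$ is increasing by the symmetric argument. Since only step (iii) varies with $k$ and contributes $\binom{n-m}{k-l}$ possibilities, summing over the $k$-independent data in (i)--(ii) gives the desired factorization, with $N(n)$ equal to the number of admissible pairs $(\mu,\text{filling})$.

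The main obstacle I expect is the bookkeeping required to make the decomposition precise: describing which Ferrers shapes $\mu$ can actually arise as the protected area of some SYT (in particular, those with $\mu_1=\mu_2$ when $\mu$ has at least two rows and $\mu_1'=\mu_2'$ when it has at least two columns, together with the degenerate single-row and single-column cases), checking that attaching the surplus to $\mu$ always yields a Ferrers shape whose first row has the correct length $k$, and verifying that the correspondence between $(l,m)$-protected SYT and triples $(\mu,\text{filling},B_E)$ is a bijection. Once this bookkeeping is settled, the log-concavity is immediate.
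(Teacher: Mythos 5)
Your proposal is correct, but it takes a genuinely different route from the paper. The paper never computes $p_{n,k}$; instead it builds an explicit injection $\psi_{n,k}\colon P_{n,k-1}\times P_{n,k+1}\to P_{n,k}\times P_{n,k}$ by detaching the two surpluses, assembling them into hooks of size $n-m+1$, applying the hook injection $\varphi_{n-m+1,k-l,k-l+2}$ from the earlier theorem, and reattaching the result to the untouched protected areas. Your argument isolates the same structural fact that makes the paper's construction work --- condition \eqref{eqn:condition_protected} guarantees that, once the protected area (shape plus filling) is fixed, the $n-m$ surplus values may be split between east and south completely freely --- but you convert it into the product formula $p_{n,k}^{(l,m)}=N\cdot\binom{n-m}{k-l}$ rather than into an injection, and then invoke log-concavity of binomial coefficients. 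This is if anything a stronger conclusion: it specializes to $h_{n,k}=\binom{n-1}{k-1}$ when $(l,m)=(1,1)$, and it is consistent with the paper's count $p_n=(n-3)2^{n-3}$ for $(2,4)$-protected tableaux (there $N=2(n-3)$). Nothing is lost for the transfer to permutations either, since Theorem \ref{invtoperm} only needs the \emph{existence} of an injection, which the numerical inequality supplies. Two small points of care as you finish the bookkeeping: (i) in the classification of shapes that can occur as protected areas, the only degenerate case is the single box --- a single row or column of length at least $2$ can never be a protected area, since the protected area of any hook (or one-line shape) collapses to the box containing $1$; you should also verify, as you indicate, that the protected area of the reassembled tableau is again $\mu$, which holds because the shape of the protected area depends only on $\lambda_2,\dots$ and $\lambda_2'$ and not on the overhang lengths. (ii) The sequence $(p_{n,k})_{1\le k\le n}$ vanishes for $k<l$ and $k>l+n-m$, so one should note that its support is an interval (as it is for the shifted binomial coefficients), which is what makes the log-concavity inequalities hold across the boundary; the paper's own statement has the same implicit caveat.
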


\begin{figure}
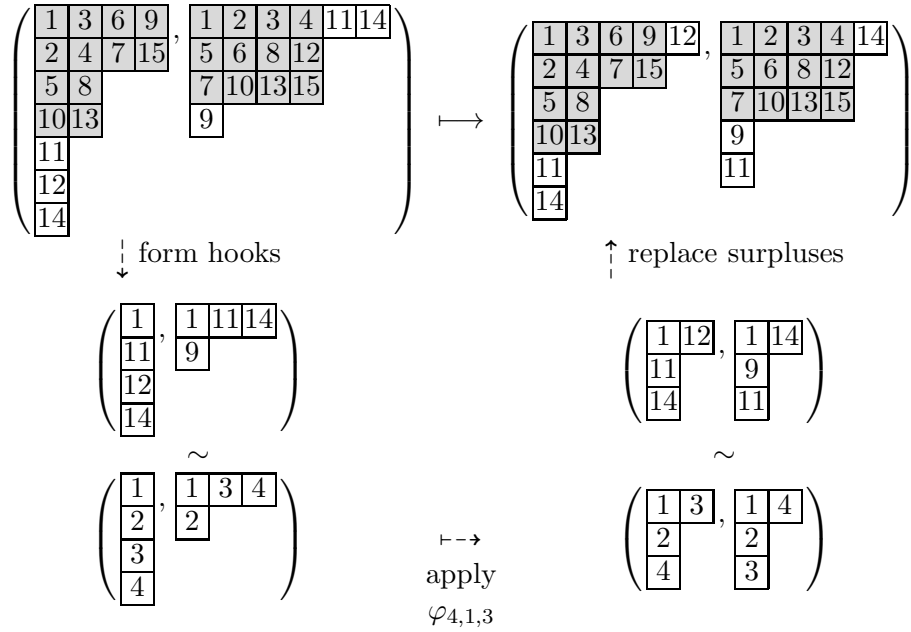

\centering
\ytableausetup{aligntableaux=top,boxsize=1.1em
}
\begin{tabular}{cP{1cm}c}
 $\begin{pmatrix} \, \begin{ytableau}
*(black!15)1 & *(black!15) 3 & *(black!15) 6 & *(black!15) 9\\
*(black!15) 2 & *(black!15)4 & *(black!15)7 & *(black!15)15 \\
*(black!15)5 & *(black!15)8 \\
*(black!15)10 & *(black!15)13 \\
11 \\
12 \\
14
\end{ytableau} \, , \,
\begin{ytableau}
*(black!15)1 & *(black!15) 2 & *(black!15) 3 & *(black!15) 4 & 11 & 14\\
*(black!15) 5 & *(black!15)6 & *(black!15)8 & *(black!15)12 \\
*(black!15)7 & *(black!15)10 &*(black!15)13 & *(black!15)15 \\
9 \\
\end{ytableau} \,\,\end{pmatrix}$ \hspace{-0.5cm} & $\longmapsto$ 

&\hspace{-0.5cm} $\begin{pmatrix} \, \begin{ytableau}
*(black!15)1 & *(black!15) 3 & *(black!15) 6 & *(black!15) 9 & 12\\
*(black!15) 2 & *(black!15)4 & *(black!15)7 & *(black!15)15 \\
*(black!15)5 & *(black!15)8 \\
*(black!15)10 & *(black!15)13 \\
11 \\
14
\end{ytableau} \, , \,
\begin{ytableau}
*(black!15)1 & *(black!15) 2 & *(black!15) 3 & *(black!15) 4  & 14\\
*(black!15) 5 & *(black!15)6 & *(black!15)8 & *(black!15)12 \\
*(black!15)7 & *(black!15)10 &*(black!15)13 & *(black!15)15 \\
9 \\
11
\end{ytableau} \,\,\end{pmatrix}$ \\
[4.5ex]
\textcolor{black}{$\dashdownarrow$ form hooks} &&   \textcolor{black}{$\dashuparrow$ replace surpluses}\\ [2.5ex]
$\begin{pmatrix} \, \begin{ytableau}
1 \\
11 \\
12 \\
14
\end{ytableau} \, , \,
\begin{ytableau}
1  & 11 & 14\\
9 \\
\end{ytableau} \, \,\end{pmatrix}
$ & & 
$
\begin{pmatrix} \, \begin{ytableau}
1 & 12 \\
11 \\
14
\end{ytableau} \, , \,
\begin{ytableau}
1 &  14 \\
9 \\
11
\end{ytableau} \, \,\end{pmatrix}
$ \\
$\sim$ && $\sim$ \\

$\begin{pmatrix} \, \begin{ytableau}
1 \\
2 \\
3 \\
4
\end{ytableau} \, , \,
\begin{ytableau}
1  & 3 &  4\\
2 \\
\end{ytableau} \, \,\end{pmatrix}$
 & \textcolor{black}{$\dashmapsto$ apply  $\varphi_{4,1,3}$}
& $\begin{pmatrix} \, \begin{ytableau}
1 & 3 \\
2 \\
 4
\end{ytableau} \, , \,
\begin{ytableau}
1 &  4 \\
2 \\
3
\end{ytableau} \, \,\end{pmatrix}
$ \\
\end{tabular}
\caption{An example of the injective map $\psi_{15,5}$ described in the proof of Theorem~\ref{thm:log-conc_hooks}. Here, the map is applied to a pair of $(4,12)$-protected SYT.}
\label{fig:example_protected}
\end{figure}

\begin{proof}
Let us fix the integers $l$ and $m$ and omit them in the notation.
In order to prove the log-concavity of the sequence $(p_{n,k})$ we need to construct injections $\psi_{n,k}$ from $P_{n,k-1} \times P_{n,k+1}$ to $P_{n,k} \times P_{n,k}$ for all $n \in \mathbb{N}$ and $1 \leq k \leq n$.

Let $(T_1, T_2)$ be a pair of SYT in $P_{n,k-1} \times P_{n,k+1}$. 
The maps $\psi_{n,k}$ shall never affect the protected areas of $T_1$ and $T_2$ -- thus this name.
Let $H_1$ and $H_2$ be the hooks that one obtains from the surpluses of $T_1$ and $T_2$ as follows: Place the element 1 at position $(1,1)$; then attach the respective eastern surplus to the east of $1$ and the southern surplus to the south of $1$.
The hooks $H_1$ and $H_2$ then both consist of $(n-m+1)$ boxes.
Moreover, the first row of $H_1$ has $k-l$ elements and the first row of $H_2$ has $k-l+2$ elements.
Let us denote by $\tilde{H_i}$  the SYT that we obtain from $H_i$ for $i=1,2$ by replacing the entries order-isomorphically by the integers from 1 up to $(n-m+1)$.
We can then apply the injection $\varphi_{n-m+1,k-l,k-l+2}$ defined in the proof of Theorem~\ref{thm:log-conc_hooks} to $(\tilde{H}_1,\tilde{H}_2)$ and obtain a pair $(J_1,J_2)$ in $H_{n-m+1, k-l+1} \times H_{n-m+1,k-l+1}$. 

In order to obtain $(U_1,U_2)$, the image of $(T_1, T_2)$ under $\psi_{n,k}$, we now do the following for $i=1,2$:
First we create the hooks $\tilde{J_i}$ by order-isomorphically replacing the elements $2, \ldots, n-m+1$ in $J_i$ by the elements that occurred in the respective surpluses of $T_i$.
Then we attach the eastern surplus of $\tilde{J_i}$ to the east of the first row of the protected area of $T_i$ and the southern surplus of $\tilde{J_i}$ to the south of its first column.

The only change that has been made to the shapes of $T_1$ and $T_2$ was to remove one box at the end of the first column or row and to place it at the end of the first row or column.
The shapes of $U_1$ and $U_2$ are thus as desired and the sizes of the protected areas are unchanged.
We still need to check whether $U_1$ and $U_2$ are actually SYTs, i.e., whether the numbers in all rows and columns are increasing.
Clearly, we only need to do so for the first row and the first column since the other parts have not been affected. 
By definition of the maps $\varphi_{n-m+1,k-l,k-l+2}$, the elements in the surpluses of $U_1$ and $U_2$ are increasing.
Moreover, the elements in the surpluses of $U_1$ and $U_2$ are all larger than the elements contained in the first row and first column of the corresponding protected areas.
Thus we indeed obtain a pair of tableaux  $(U_1,U_2)$ in $P_{n,k} \times P_{n,k}$ with the same values for $l$ and $m$ as for $(T_1,T_2)$.
For an example of the map $\psi_{n,k}$ with $n=15$, $k=5$ and $(l,m)=(4,12)$, see Figure~\ref{fig:example_protected}. 
The injectivity of the maps $\psi_{n,k}$ clearly follows from the injectivity of the maps $\varphi_{n-m+1,k-l,k-l+2}$.
\end{proof}

\section{Lattice paths and $321$-avoiding permutations}

In the following, let $A_{n,k}$ denote the set of all $321$-avoiding involutions of length $n$  with  longest increasing sequence of length $k$.
The cardinality of $A_{n,k}$ will be denoted by $a_{n,k}$. 
The Robinson-Schensted correspondence maps elements of $A_{n,k}$ into  SYT  that consist of at most two rows such that  the length of the first row is  $k\geq \lceil n/2 \rceil$. Then the hooklength formula (see Chapter 14 in \cite{handbook}) yields that
\[a_{n,k} = {n\choose k} \frac{2k-n+1}{k+1}.\] 
Using this formula, it is routine to prove that for any fixed $n$, the sequence $a_{n,k}$ is log-concave if
$k\in [n/2, n]$, but that proof is not particularly elucidating. In what follows, we provide a more elegant, injective proof.

There is a natural bijection $f=f_n$ between the set of such 
SYT and lattice paths using steps $(0,1)$ and $(1,0)$ that start at $(0,0)$, consist of $n$ steps, and never go above
the diagonal $x=y$. 
We will refer to these steps as East and North steps. Indeed, if $i_1,i_2,\cdots ,i_k$ are the numbers in the first row of the  SYT $T$, and $j_1,j_2,\cdots ,j_{n-k}$ are the numbers in the second row  of $T$,  then we can set $f(T)$ to be the lattice path starting at $(0,0)$ whose East steps are in positions  $i_1,i_2,\cdots ,i_k$, and whose
North steps are in positions $j_1,j_2,\cdots ,j_{n-k}$. The fact that $T$ is a SYT means that $i_t<j_t$ for all $t\leq n-k$,
so $f(T)$ will indeed stay below the diagonal $x=y$ since its $t$-th North step will come some time after its $t$-th East step. 
Note that elements of $A_{n,k}$ will be mapped into paths that end in $(k,n-k)$. 

For an example, see Figure~\ref{fig:pairofpaths}: The path $Q$ that is marked by circles corresponds to the SYT with elements $1,3,4,5,6,7$ in the first row and the element $2$ in the second row.

Let $L(n,k)$ be the set of lattice paths using steps $(0,1)$ and $(1,0)$ that start at $(0,0)$, never go above
the diagonal $x=y$, and end in $(k,n-k)$, where $n-k\leq k\leq n-2$. We define a map \[\phi : L(n,k) \times L(n,k+2)
\rightarrow  L(n,k+1) \times L(n,k+1)\]
as follows. 

Let $(P,Q)\in  L(n,k) \times L(n,k+2)$. Let us translate $P$ by the vector $(1,-1)$ to obtain the path $P'$ that starts
at $(1,-1)$, ends in $(k+1,n-k-1)$, and {\em never goes above the diagonal} $x-2=y$. As $Q$ starts West of $P'$ and ends
East of $P'$, the paths $P'$ and $Q$ will have at least one point in common. Let $X$ be the last such point. Note that
$X$ is not the endpoint of $P'$ or $Q$, since those two paths do not end in the same point. 
We now ``flip'' the paths at $X$, which means the following. Let $P'_1$ and $Q_1$ denote the parts of $P'$ and $Q$ that 
end in $X$, and let $P'_2$ and $Q_2$ denote the parts of $P'$ and $Q$ that start in $X$. Then we define 
$\phi(P,Q)$ as $( (P_1'Q_2)*, Q_1P_2')$, where $(P_1'Q_2)*$  denotes the path $P_1'Q_2$ translated back by the vector
$(-1,1)$ so that it starts at $(0,0)$. See Figure \ref{fig:pairofpaths} for an illustration. 

\begin{figure}
    \centering
    \hspace{0.02\textwidth}
    \begin{minipage}{.45\textwidth}
        \centering
        \begin{tikzpicture}[scale=0.8]

\foreach \x/\y in {0/0, 1/0, 1/1, 2/1, 3/1,4/1,5/1,6/1}
\node[fill=black,circle, inner sep=0.8mm] at (\x,\y) {};

\draw (0,0)--(1,0)--(1,1)--(2,1) -- (3,1)--(4,1)--(5,1)--(6,1); 
   
\foreach \x/\y in {1/-1, 2/-1, 3/-1, 3/0, 4/0,4/1,4/2,5/2}
\node[fill=black,rectangle, inner sep=1mm] at (\x,\y) {};
    
\draw[dashed] (1,-1)--( 2,-1)--(3,-1)--( 3,0)--(4,0)--(4,1)--(4,2)--(5,2);

\node[anchor=north] at (1.5,-1) {$P_1'$};
\node[anchor=south] at (4.5,2) {$P_2'$};
\node[anchor=north] at (0.5,0) {$Q_1$};
\node[anchor=south] at (5.5,1) {$Q_2$};

\node[anchor=south west] at (4,1) {$X$};
    
\draw[dotted] (0,0)--(3,3);
\draw[loosely dotted] (1,-1)--(5,3);
\end{tikzpicture}
    \end{minipage}%
    \hfill
    \begin{minipage}{0.45\textwidth}
        \centering
     \begin{tikzpicture}[scale=0.8]

\foreach \x/\y in {0/0, 1/0, 1/1, 2/1, 3/1,4/1,4/2,5/2}
\node[fill=black,circle, inner sep=0.8mm] at (\x,\y) {};

\draw (0,0)--(1,0)--(1,1)--(2,1) -- (3,1)--(4,1)--(4,2)--(5,2);
   
\foreach \x/\y in {1/-1, 2/-1, 3/-1, 3/0, 4/0,4/1,5/1,6/1}
\node[fill=black,rectangle, inner sep=1mm] at (\x,\y) {};
    
\draw[dashed] (1,-1)--( 2,-1)--(3,-1)--( 3,0)--(4,0)--(4,1)--(5,1)--(6,1); 

\node[anchor=north] at (1.5,-1) {$P_1'$};
\node[anchor=south] at (4.5,2) {$P_2'$};
\node[anchor=north] at (0.5,0) {$Q_1$};
\node[anchor=south] at (5.5,1) {$Q_2$};

\node[anchor=south west] at (4,1) {$X$};

\draw[dotted] (0,0)--(3,3);
\draw[loosely dotted] (1,-1)--(5,3);
\end{tikzpicture}   
    \end{minipage}
    \hspace{0.05\textwidth}
\caption{The main step in the construction of the map $\phi$. }
\label{fig:pairofpaths}
\end{figure}
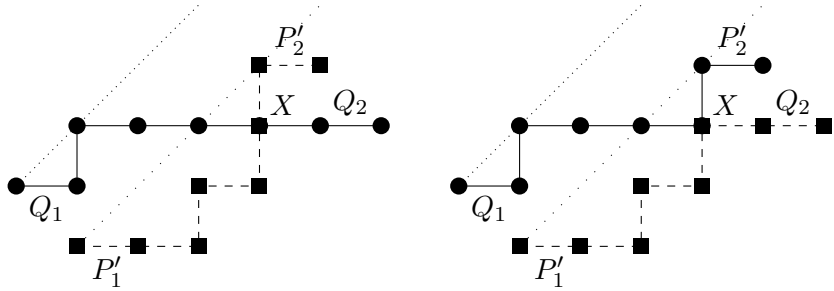

\begin{proposition}
The map $\phi$ described above indeed maps into the set $ L(n,k+1) \times L(n,k+1)$ and is an injection from  $L(n,k) \times L(n,k+2)$ into $L(n,k+1) \times L(n,k+1)$.
\end{proposition}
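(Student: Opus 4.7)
The plan is to check three things in sequence: (a) that the flip point $X$ always exists, (b) that both output paths lie in $L(n,k+1)$, and (c) that the construction is invertible, which gives injectivity.

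For (a), parametrize $P'$ and $Q$ by their step index $t = 0, 1, \ldots, n$. Because $P'$ starts at $(1,-1)$ and $Q$ starts at $(0,0)$, both end up at a point on the anti-diagonal $x+y = t$ at time $t$. Consider the integer quantity $\delta(t) = Q(t)_x - P'(t)_x$. At $t=0$ we have $\delta(0) = -1$; at $t=n$ we have $\delta(n) = (k+2) - (k+1) = 1$. Since each step changes $\delta$ by $-1$, $0$, or $+1$, $\delta$ must hit $0$ at some $t$, which gives $P'(t) = Q(t)$. Take $X$ to be their common position at the largest such $t$, call it $t_X$.

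For (b), observe first that the second output path $Q_1 P_2'$ begins at $(0,0)$, ends at $(k+1, n-k-1)$, and has every point lying weakly below $y = x$: $Q_1 \subseteq Q$ stays below $y = x$, and $P_2' \subseteq P'$ even stays below $y = x - 2$, so this path is in $L(n,k+1)$. For the first path $(P_1' Q_2)^*$, the shifted $P_1'$ portion is fine because $P_1' \subseteq P'$ stays below $y = x - 2$, so translating by $(-1,1)$ keeps it below $y = x$. The delicate point is the translated $Q_2$: a priori, $Q_2 \subseteq Q$ only satisfies $y \leq x$, and shifting would push it above the diagonal. The key claim is that in fact $Q_2$ lies weakly below $y = x - 2$. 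This follows from the maximality of $t_X$: for any $t > t_X$ we have $\delta(t) \neq 0$, and since $\delta$ can change by at most $1$ per step and $\delta(n) = 1$, $\delta$ must remain positive for all $t \in (t_X, n]$. Together with $\delta(t_X) = 0$, this gives $Q(t)_x \geq P'(t)_x$ throughout $t \in [t_X, n]$, hence $Q(t)_y \leq P'(t)_y \leq P'(t)_x - 2 \leq Q(t)_x - 2$. So the shifted $Q_2$ stays below $y = x$, and $(P_1'Q_2)^*$ is a genuine element of $L(n,k+1)$, ending at $(k+1,n-k-1)$ as required.

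For (c), given any $(R_1,R_2)$ in the image, translate $R_1$ by $(1,-1)$ to form $R_1'$; by construction $R_1'$ equals $P_1'$ up to time $t_X$ and $Q_2$ afterwards, while $R_2$ equals $Q_1$ up to $t_X$ and $P_2'$ afterwards. Intersections of $R_1'$ and $R_2$ before time $t_X$ coincide with intersections of $P'$ and $Q$ before $t_X$; intersections after $t_X$ coincide with intersections of $Q$ and $P'$ after $t_X$, of which there are none by choice of $X$. Hence the last intersection of $R_1'$ and $R_2$ is again $X$, and splitting both paths at this point recovers $P_1', P_2', Q_1, Q_2$, hence $(P,Q)$. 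This proves injectivity.

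The main obstacle is the argument in (b) that the $Q_2$-piece, after translation, stays below the diagonal; this is the step where the particular choice of $X$ as the \emph{last} intersection (rather than, say, the first) becomes essential, because it forces $Q$ to lie strictly east of $P'$ on the common antidiagonal for all times past $t_X$, and hence two units below the diagonal rather than merely on it.
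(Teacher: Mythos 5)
Your proof is correct and follows essentially the same route as the paper's: verify the endpoints, check that each concatenated piece stays weakly below the relevant diagonal, and invert the flip by locating the last intersection point. You in fact supply two details the paper only asserts — the discrete intermediate-value argument for the existence of $X$ and the verification (via positivity of $\delta$ past $t_X$) that $Q_2$ stays weakly below the shifted diagonal $y=x-2$ — which strengthens rather than changes the argument.
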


\begin{proof}
It is a direct consequence of the definitions that both $(P_1'Q_2)*$ and $Q_1P_2'$ will indeed start at $(0,0)$ and end
in $(k+1,n-k+1)$. So, all we need to do in order to  prove that $\phi(P,Q)\in L(n,k+1) \times L(n,k+1)$ is to show that
neither $(P_1'Q_2)*$ and $Q_1P_2'$ ever goes above the diagonal $x=y$. 

\begin{itemize}
\item  To show that $(P_1'Q_2)*$ does not go above the diagonal $x=y$ is equivalent to showing that
$P_1'Q_2$ does not go above the diagonal $x-2=y$. This is true for $P_1'$ by its definition (it is a part of $P'$), and this is true for $Q_2$ since
$Q_2$ is entirely below $P_2'$, and $P_2'$, by its definition (it is a part of $P'$), never goes above the diagonal $x-2=y$.
\item It is clear that $Q_1P_2'$ never goes above the diagonal $x=y$, since neither $Q_1$ (a part of $Q$) nor
$P_2'$ (a part of $P$) do. 
\end{itemize}

Finally, to prove that $\phi$ is injective, let $(R,S)\in  L(n,k+1) \times L(n,k+1)$. 
If $R$ and $S$ have no points 
in common (other than their starting and ending points), $(R,S)$ has no preimage under $\phi$. 
Otherwise, we can recover $X$ as the last point that $R$ and $S$ have in common other than their endpoint, and then reversing
the ``flipping'' operation described in the construction of $\phi$ we can recover the unique preimage of $(R,S)$.  
\end{proof}

\begin{corollary} For any fixed $n$, the sequence $(a_{n,k})_{n/2\leq k\leq n}$ is log-concave. 
\end{corollary}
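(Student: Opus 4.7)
The plan is to assemble the corollary directly from the preceding proposition together with the bijective identifications already recorded in the text. The existence of an injection from $L(n,k)\times L(n,k+2)$ into $L(n,k+1)\times L(n,k+1)$ is exactly the combinatorial certificate that proves log-concavity of the sequence $(|L(n,k)|)_k$, so the only remaining work is to transfer this from lattice paths back to the involutions counted by $a_{n,k}$, and then to verify that the index ranges line up.

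First I would recall that a $321$-avoiding involution $\sigma$ of length $n$ has no decreasing subsequence of length $3$, so under the Robinson--Schensted correspondence it is identified with a single SYT with at most two rows, the length of whose first row equals $\ell(\sigma)=k$; in particular $k\geq \lceil n/2\rceil$. Composing with the bijection $f=f_n$ between such SYT and lattice paths introduced above yields $a_{n,k}=|L(n,k)|$ for every $k$ in $[\lceil n/2\rceil,n]$.

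Next I would apply the proposition: for every $k$ with $n-k\leq k\leq n-2$ there is an injection $\phi\colon L(n,k)\times L(n,k+2)\to L(n,k+1)\times L(n,k+1)$. Taking cardinalities gives $|L(n,k)|\cdot|L(n,k+2)|\leq |L(n,k+1)|^2$, which, translated through the previous paragraph, is $a_{n,k}\cdot a_{n,k+2}\leq a_{n,k+1}^2$. Rewriting with $j=k+1$, this is the log-concavity inequality $a_{n,j-1}\cdot a_{n,j+1}\leq a_{n,j}^2$.

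Finally I would check that the admissible range of $k$ in the proposition corresponds to every interior index of the sequence $(a_{n,k})_{\lceil n/2\rceil\leq k\leq n}$. The inequality $n-k\leq k\leq n-2$ translates to $\lceil n/2\rceil\leq k+1\leq n-1$, i.e.\ $j$ ranges over all interior indices of $[\lceil n/2\rceil, n]$, which is exactly what is needed. There is no substantive obstacle: the main content of the corollary is the injection $\phi$, and the only thing to watch is that the lower endpoint $k\geq n-k$ translates correctly to $j\geq \lceil n/2\rceil+1$, so that the boundary term $a_{n,\lceil n/2\rceil-1}$ (which is $0$ by the row-length constraint) never needs to be invoked.
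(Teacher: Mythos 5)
Your proposal is correct and matches the paper's intent exactly: the corollary is stated there without a separate proof precisely because it follows immediately from the proposition via the identification $a_{n,k}=|L(n,k)|$ (Robinson--Schensted composed with the bijection $f_n$), and your index bookkeeping, showing that $n-k\leq k\leq n-2$ covers exactly the interior indices of $[\lceil n/2\rceil, n]$, is accurate.
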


Therefore, it follows by the principle that we used to prove Theorem \ref{invtoperm} that we have an injective proof of the following corollary as well.

\begin{corollary}
Let $b_{n,k}$ denote the number of 321-avoiding permutations of length $n$ in which the longest increasing subsequence is of length
$k$. Then for any fixed $n$, the sequence $(b_{n,k})_{n/2\leq k\leq n}$ is log-concave. 
\end{corollary}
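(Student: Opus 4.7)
The plan is to apply, one more time, the ``double the injection'' technique from the proof of Theorem~\ref{invtoperm}, now starting from the injection for two-row SYT that is encoded by the map $\phi$ on lattice paths.

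The key input is the following identification. Under the Robinson--Schensted correspondence, a 321-avoiding permutation of length $n$ with longest increasing subsequence of length $k$ is in bijection with an ordered pair $(P,Q)$ of SYT of the common shape $(k,n-k)$ (with $k\geq n/2$), since avoiding 321 forces the longest decreasing subsequence to have length at most $2$. Each of $P$ and $Q$ is the image of a unique 321-avoiding involution, so may be identified with an element of $A_{n,k}$ in the sense of the previous section. Transporting the injection $\phi$ from the Proposition through the bijection $f_n$ between two-row SYT and lattice paths, and shifting the index by $1$, we obtain an injection
\[
g : A_{n,k-1}\times A_{n,k+1} \longrightarrow A_{n,k}\times A_{n,k}
\]
for every $k$ with $n/2\leq k\leq n-1$ (if $k-1<n/2$ the left-hand side is empty and there is nothing to check).

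The next step is verbatim the argument used to prove Theorem~\ref{invtoperm}. Given $(p_1,p_2)\in B_{n,k-1}\times B_{n,k+1}$ with RS-pairs $(P_1,Q_1)$ and $(P_2,Q_2)$, define $G(p_1,p_2)=(w_1,w_2)$, where $w_1$ is the 321-avoiding permutation whose RS-pair is $g(P_1,P_2)$ and $w_2$ is the 321-avoiding permutation whose RS-pair is $g(Q_1,Q_2)$. One checks immediately that the two entries of $g(P_1,P_2)$ share the common shape $(k,n-k)$, and likewise for $g(Q_1,Q_2)$, so these are legitimate RS-pairs and both $w_1$ and $w_2$ lie in $B_{n,k}$. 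Injectivity of $G$ follows from injectivity of $g$ applied componentwise.

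There is essentially no obstacle here: the whole content is the same bookkeeping already executed in Theorem~\ref{invtoperm}, the only new ingredient being the observation that a 321-avoiding permutation factors through a pair of two-row SYT, which is exactly the domain of applicability of the Proposition. The mild subtlety worth explicitly pointing out is simply the index shift that converts the injection from $L(n,k)\times L(n,k+2)$ to the required form with indices $k-1$ and $k+1$.
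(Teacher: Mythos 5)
Your proof is correct and follows exactly the route the paper takes: the paper disposes of this corollary in one line by invoking ``the principle that we used to prove Theorem~\ref{invtoperm}'' applied to the injection coming from the lattice-path flip. You spell out the same argument in full, and in fact you are more careful than the paper on one point worth keeping: since all tableaux involved have at most two rows, the two components of $g(P_1,P_2)$ automatically share the shape $(k,n-k)$, so they genuinely form the RS-pair of a permutation in $B_{n,k}$ --- a shape-compatibility issue that the general statement of Theorem~\ref{invtoperm} glosses over but that is harmless here.
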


\section{Concluding remarks}

In this paper we characterized several sets of permutations for which our Conjecture~\ref{conj} holds.
One main tool was to first prove our results for involutions and then to transfer them to arbitrary permutations.
A next step in this line of research and towards proving our conjecture in general would be to find larger sets of permutations for which this method can be applied.
Also, it would be interesting to find applications of this technique to other permutation statistics than the length of the longest increasing subsequence.

\bibliographystyle{abbrv}
\bibliography{lit}

\end{document}